\newtheorem{theorem}{Theorem}
\newtheorem{lemma}[theorem]{Lemma}  
\newtheorem{corollary}[theorem]{Corollary}
\theoremstyle{remark}  
\newcommand{\beq}{\begin{equation}}  
\newcommand{\enq}{\end{equation}}
\newcommand{\real}{\mathbb{R}}  
\newcommand{\R}{\mathbb{R}}
\newcommand{\loc}{\mbox{\footnotesize{loc}}}  
\newcommand{\cal}{\mathcal}
\newcommand{\neps}{\nu,\varepsilon}
\newcommand{\vare}{\varepsilon}
\newcommand{\abug}{\mathfrak{A^{\vare}}}
\newcommand{\bbug}{\mathfrak{B^{\vare}}}
\DeclareMathOperator{\curl}{curl}
\begin{document}  
\title[Vanishing viscosity limit around a small obstacle]
{Incompressible flow around a small obstacle and the vanishing viscosity limit}  
  
\author[]{Drago\c{s} Iftimie}  
\address{Universit{\'e} de Lyon,\\
Universit{\'e} Lyon 1\\
CNRS, UMR 5208 Institut Camille Jordan\\
B{\^a}timent du Doyen Jean Braconnier\\
43, blvd du 11 novembre 1918\\
F--69622 Villeurbanne Cedex\\
France}  
\email{dragos.iftimie@univ-lyon1.fr}  

\author[Milton Lopes {\em et al.}]{Milton C. Lopes Filho}  
\address{Depto. de Matem\'{a}tica, IMECC-UNICAMP \\ Cx. Postal 6065, Campinas SP 13083-970, Brazil} 
\email{mlopes@ime.unicamp.br}  
  
\author[]{Helena J. Nussenzveig Lopes}  
\address{Depto. de Matem\'{a}tica, IMECC-UNICAMP \\ Cx. Postal 6065, Campinas SP 13083-970, Brazil}
\email{hlopes@ime.unicamp.br}

\begin{abstract} In this article we consider viscous flow in the exterior of an obstacle satisfying the
standard no-slip boundary condition at the surface of the obstacle. We seek conditions under which
solutions of the Navier-Stokes system in the exterior domain converge to solutions of the Euler 
system in the full space when both viscosity and the size of the obstacle vanish. We prove that this 
convergence is true assuming two hypothesis: first, that the initial exterior domain velocity converges 
strongly in $L^2$ to the full-space initial velocity and second, that the diameter of the obstacle
is smaller than a suitable constant times viscosity, or, in other words, that the obstacle is sufficiently
small. The convergence holds as long as the solution to the limit problem is known to exist and stays 
sufficiently smooth. This work complements the study of incompressible 
flow around small obstacles, which has been carried out in \cite{iftimie,iln03,iln06}   
\end{abstract}

\maketitle

\section{Introduction}

The purpose of the present work is to study the asymptotic behavior of families of solutions of the
incompressible Navier-Stokes equations, in two and three space dimensions, in the exterior of a single
smooth obstacle, when both viscosity and the size of the obstacle become small. More precisely, let
$\Omega$ be a smooth and bounded domain in $\real^n$, $n=2,3$, such that $\Omega$ is connected and simply connected if $n=2$ and $\R^3\setminus\Omega$ is connected and simply connected if $n=3$. Let 
 $\vare > 0$ and set $\Pi_{\vare} = \real^n \setminus \vare \overline{\Omega}$. Let $u_0$ be a smooth, 
divergence-free vector field
in $\real^n$, which gives rise to a smooth solution $u$ of the Euler equations, defined on an interval $[0,T]$.
Let  $u^{\nu,\vare} \in L^{\infty}((0,T);L^2(\Pi_{\vare}))\cap C^0_w([0,T); L^2(\Pi_{\vare})\cap 
L^2((0,T);H^1_0(\Pi_{\vare}))$ be a weak Leray solution of the incompressible Navier-Stokes equations, with viscosity 
$\nu$, in $\Pi_{\vare}$, 
satisfying the no-slip boundary condition at $\partial \Pi_{\vare}$. 
We prove that there exists a constant $C= C(u_0,\Omega,T) >0$ such that if the following hypothesis holds:

[H] We have that 
$$\sup_{\vare \leq C\nu} \|u^{\nu,\vare}(\cdot,0) - u_0\|_{L^2(\Pi_{\vare})} \to 0,$$ 
as $\nu \to 0$, then we have that $\sup_{\vare \leq C\nu} \|u^{\nu,\vare} - u\|_{L^{\infty}((0,T);L^2(\Pi_{\vare})} \to 0,$
as $\nu \to 0$. Furthermore, if we assume that [H] occurs at a certain rate in $\nu$ we can obtain an
explicit error estimate in $L^2$.

In addition, we prove that if  we fix an initial vorticity 
$\omega_0$ in $\real^2$, smooth and compactly supported in $\real^2 \setminus \{0\}$ and consider 
$u^{\nu,\vare}(\cdot,0) = K^{\vare}[\omega_0] + m H^{\vare}$, where $K^{\vare}$ denotes the 
Biot-Savart operator in $\Pi_{\vare}$, while $H^{\vare}$ is the normalized generator of the
harmonic vector fields in $\Pi_{\vare}$, and $m = \int \omega_0$, then hypothesis [H] is satisfied. In the case of the dimension three, if we fix an initial vorticity $\omega_0$ in $\real^3$, smooth, divergence-free and compactly supported in $\real^3 \setminus \{0\}$ and consider 
$u^{\nu,\vare}(\cdot,0) = K^{\vare}[\omega_0]$, where $K^{\vare}$ again denotes the Biot-Savart
operator in $\Pi_{\vare}$, then it is proved in \cite{iftimie} that hypothesis [H] is satisfied. 
In both cases we have rates for the convergence of the initial data in such a way that  
$\|u^{\nu,\vare} - u\|_{L^2(\Pi_{\vare})} = \mathcal{O}(\sqrt{\nu})$ when $\nu \to 0$, uniformly in time.

A central theme in incompressible hydrodynamics is the vanishing viscosity limit, something naturally associated 
with the physical phenomena of turbulence and of boundary layers. In particular, a natural question to ask is 
whether the limiting flow associated with the limit of vanishing viscosity satisfies the incompressible Euler 
equations. This is known to be true in the absence of material boundaries, see \cite{Majda93,Chemin01} for the
two dimensional case and \cite{Kato72,Swann71} for the three dimensional case. Also, if the boundary conditions are of Navier type, see \cite{cmr98,lnp05,ip07,xz07}, noncharacteristic, see \cite{tw02} or for certain symmetric 2D flows,
see \cite{matsui95,lmn07,lmnt08}, convergence to an Euler solution remains valid. The most relevant case from the physical point of view corresponds to no slip boundary conditions. In this case, we have results on criteria for convergence to solutions of the Euler system, see \cite{Kato83,tw,Wang,Kelliher}, but the general problem remains wide open. To be a bit more precise, let us assume that $u^0$ is a solution of the Euler equations in the exterior domain 
$\Pi$ and that $u^{\nu}$ is a solution of the Navier-Stokes equations with viscosity $\nu$, with no-slip boundary condition in $\Pi$. Suppose further that $u^{\nu}$ and $u^0$ have the same initial velocity $v_0$ and that both $u^0$ and the family $\{u^{\nu}\}$ are smooth, defined on a fixed time interval $[0,T]$. It is easy to see that 
\[\delta E(\nu,t) \equiv \|u^0(\cdot,t) - u^{\nu}(\cdot,t)\|_{L^2(\Pi)} \]
is uniformly bounded in $\nu$ and $t \in [0,T]$ (by $2\|v_0\|_{L^2}$ if $v_0$ has finite energy), but it is not known whether $\delta E \to 0$ when $\nu \to 0$. In fact, given the experimentally and numerically observed behavior of high Reynolds number flows in the presence of boundaries, it is reasonable to conjecture that $\delta E$ does not, in general, 
vanish as $\nu \to 0$. Of course, this leaves open the possibility that $u^{\nu}$ might approach another solution of the Euler equations, different from $u^0$.     

This article contains an answer to the following question: can we make $\delta E$ small, by making 
both the viscosity and the obstacle small?   This problem was one of the main motivations underlying the authors' research on incompressible flow around small obstacles. Our previous results include the small obstacle limit for the 2D inviscid equations, see  \cite{iln03,Lopes07} and for the viscous equations, see \cite{iln06,iftimie}. The work we present here is a natural outgrowth of this research effort.

The remainder of this article is divided into three sections. In Section 2, we state and prove our main result, 
namely the convergence in the small viscosity and small obstacle limit, assuming convergence of the initial data.  
In Section 3 we study the problem of convergence of the initial data: for two space dimensions, we adapt  
techniques developed in our previous work, while for three space dimensions we report to work by D. Iftimie and J. Kelliher. In Section 4, we interpret the smallness condition on the obstacle as the condition that the {\it local} Reynolds number stays below a certain (small) constant. In addition, still in Section 4, we obtain an enstrophy estimate for the wake generated by the small obstacle and we list some open problems.  
       
\section{Main theorem}

We use the notation from the introduction to state and prove our main result. We consider the 
initial-value problem for the Navier-Stokes equations in $\Pi_{\vare}$, with no-slip boundary condition,
given by:
\begin{equation} \label{neps}
\left\{ \begin{array}{ll}
\partial_t u^{\neps} + u^{\neps} \cdot \nabla u^{\neps} = -\nabla p^{\neps} + \nu \Delta u^{\neps}, & \mbox{ in }
\Pi_{\vare} \times (0,\infty)\\
\mbox{div }u^{\neps} = 0 & \mbox{ in } \Pi_{\vare} \times [0,\infty)\\
u^{\neps}(x,t) = 0 & \mbox{ for } x \in \partial \Pi_{\vare}, \;\;\; t>0\\
u^{\neps}(t=0) = \vartheta^{\vare}(x) & \mbox{ for } x\in \Pi_{\vare},\;\;\;t=0.
\end{array} \right. 
\end{equation}

We assume that the initial velocity $\vartheta^{\vare} \in L^2_{\loc}(\Pi_{\vare})$ is divergence-free and tangent to 
$\partial \Pi_{\vare}$, but we do not assume that it satisfies the no-slip boundary condition. In three dimensions we assume further that $\vartheta^{\vare} \in L^2(\Pi_{\vare})$. Under these hypothesis' it was shown by H. Kozono and M. Yamazaki, see \cite{kozono-yamazaki}, that, in two dimensions, there is a unique global strong solution to \eqref{neps} with initial velocity $\vartheta^{\vare}$, while, in three dimensions, there is a global Leray weak solution $u^{\neps}$ of \eqref{neps}, see \cite{hopf}. 
More precisely, in three dimensions there exists 
\[u^{\neps} \in L^{\infty}([0,\infty);L^2(\Pi_{\vare})) \cap C^0_w([0,\infty);L^2(\Pi_{\vare}))\cap 
L^2_{\loc}([0,\infty);H^1_0(\Pi_{\vare}))\]
such that $u^{\neps}$ is a distributional solution of \eqref{neps} and the following energy inequality holds true:
\begin{equation}   \label{energ}
\|u^{\neps}(t)\|_{L^2(\Pi_{\vare})}+2\nu\int_0^t \|\nabla u^{\neps}(s)\|_{L^2(\Pi_{\vare})}\,ds 
\leq \|u^\vare_0\|_{L^2(\Pi_{\vare})}\qquad\forall t\geq0.  
 \end{equation}

Both $\vartheta^{\vare}$ and $u^{\neps}(\cdot,t)$ are defined only in $\Pi_{\vare}$, but we will consider
them as defined on the whole space by extending them to be identically zero inside $\vare \Omega$. 

Let $u_0$ be a smooth, divergence-free vector field defined in all $\real^n$, and let $u$ be the corresponding smooth solution of the Euler equations; in two dimensions $u$ is globally defined while in three dimensions it is defined, at least, on an interval $[0,T]$.

We are now ready to state our main result. 

\begin{theorem} \label{mainthm} Assume that 
\[\|\vartheta^{\vare} - u_0 \|_{L^2(\real^n)} \to 0 \mbox{ as } \vare \to 0.\]
Fix $T>0$, arbitrary if $n=2$, and smaller than the time of existence of the smooth Euler solution if $n=3$.  
Then there exists a constant $C_1 = C_1(\Omega,u_0,T)$ such that, if $\vare \leq C_1 \nu$, then 
\[\|u^{\neps}(\cdot,t) - u(\cdot,t)\|_{L^2(\real^n)} \to 0 \mbox{ as } \nu \to 0.\]

Moreover, if we assume that $\|\vartheta^{\vare} - u_0 \|_{L^2(\real^n)} = \mathcal{O}(\sqrt{\nu})$,
 then there exists also $C_2=C_2(T,u_0,\Omega)$ such that 
\[\|u^{\neps}(\cdot,t) - u(\cdot,t)\|_{L^2(\real^n)} \leq C_2 \sqrt{\nu},\]
for all $0<\vare<C_1 \nu$ and all $0\leq t \leq T$. 
\end{theorem}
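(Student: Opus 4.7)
The main difficulty is that the smooth Euler solution $u$ is defined on all of $\real^n$ and does not satisfy the no-slip condition on $\partial\Pi_\vare$, so it cannot be inserted directly into an energy estimate against $u^{\neps}$. My plan is to replace $u$ by a corrector $\tilde u^\vare$ which is divergence-free, vanishes on $\partial\Pi_\vare$, and agrees with $u$ outside a shell of thickness $\delta=\delta(\vare)$ surrounding $\vare\overline{\Omega}$. In dimension two, write $u=\nabla^\perp\psi$, choose a smooth cutoff $\chi^\vare$ equal to $1$ near the obstacle and to $0$ outside the shell, and set $\tilde u^\vare=\nabla^\perp\bigl((1-\chi^\vare)(\psi-\psi_\vare)\bigr)$, where $\psi_\vare$ is the (constant) trace of $\psi$ on $\partial(\vare\Omega)$; divergence-free character and no-slip are then automatic. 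In dimension three the same construction is performed starting from a vector potential $A$ with $u=\curl A$ (such an $A$ exists since $\R^3\setminus\Omega$ is simply connected in our assumptions). The scale $\delta\sim\vare$ turns out to be the natural choice.

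Second, $\tilde u^\vare$ satisfies a perturbed Navier--Stokes system
\[\partial_t \tilde u^\vare + \tilde u^\vare\cdot\nabla\tilde u^\vare+\nabla\tilde p^\vare -\nu\Delta\tilde u^\vare = R^\vare,\]
with $R^\vare$ supported in the shell. Using the smoothness of $u$, the size $O(\vare)$ of $\psi-\psi_\vare$ on the shell, and $|\nabla^k\chi^\vare|\lesssim\vare^{-k}$, one checks the corrector estimates that will be needed below: $\|\tilde u^\vare-u\|_{L^\infty_t L^2(\real^n)}\to 0$, $\nu\|\nabla\tilde u^\vare\|_{L^2_{t,x}}^2\to 0$, and appropriate smallness of $R^\vare$ in the dual norm of $L^2_t H^1_0(\Pi_\vare)$, with all constants depending only on $u_0,\Omega,T$.

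The heart of the argument is an energy estimate for $w:=u^{\neps}-\tilde u^\vare$. Since $u^{\neps}$ is only a Leray weak solution in 3D, I cannot differentiate $\|w\|_{L^2}^2$ directly; instead I use the classical weak--strong scheme. Combine the energy inequality \eqref{energ} for $u^{\neps}$ with the identities obtained by testing the weak formulation of \eqref{neps} against $\tilde u^\vare$ and the equation for $\tilde u^\vare$ against $u^{\neps}$. After the usual cancellation of the pressures and of the transport term against the test field, one arrives at
\[\|w(t)\|_{L^2}^2+2\nu\int_0^t\|\nabla w\|_{L^2}^2\,ds \le \|w(0)\|_{L^2}^2 + 2\int_0^t\!\!\int(w\cdot\nabla\tilde u^\vare)\cdot w\,dx\,ds + \mathrm{rem},\]
where $\mathrm{rem}$ is controlled by the corrector estimates of Step 2.

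The main obstacle is the integral $\int(w\cdot\nabla\tilde u^\vare)\cdot w$ on the shell, where $|\nabla\tilde u^\vare|$ is of order $1/\vare$. The key observation is that $w$ vanishes on the inner boundary of the shell, so a Poincar\'e inequality across a shell of thickness $\sim\vare$ gives $\|w\|_{L^2(\mathrm{shell})}^2\lesssim\vare^2\|\nabla w\|_{L^2(\mathrm{shell})}^2$, whence
\[\Bigl|\int(w\cdot\nabla\tilde u^\vare)\cdot w\,dx\Bigr|\le \|\nabla\tilde u^\vare\|_{L^\infty}\|w\|_{L^2(\mathrm{shell})}^2 \lesssim \vare\,\|\nabla w\|_{L^2}^2.\]
This is exactly where the smallness hypothesis enters: for $\vare\le C_1\nu$ with $C_1$ sufficiently small, the term $C\vare\|\nabla w\|_{L^2}^2$ is absorbed into the viscous dissipation $2\nu\|\nabla w\|_{L^2}^2$ on the left-hand side. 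A standard Gronwall argument in $\|w\|_{L^2}^2$ then yields $\|w\|_{L^\infty_t L^2}\to 0$, and combining with the convergence of $\tilde u^\vare$ to $u$ in $L^\infty_t L^2(\real^n)$ proves the first conclusion. For the quantitative statement, one tracks the explicit powers of $\nu$ in every remainder term under the assumption $\|\vartheta^\vare-u_0\|_{L^2}=\mathcal O(\sqrt{\nu})$ and $\vare\le C_1\nu$; each contribution is bounded by $C\sqrt{\nu}$, giving the final rate.
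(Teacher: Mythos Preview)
Your proposal is correct and follows essentially the same route as the paper: the corrector is built by cutting off a stream function/vector potential in a shell of width $\sim\vare$, the dangerous quadratic term $\int (w\cdot\nabla\tilde u^\vare)\cdot w$ is tamed by the Poincar\'e inequality on the shell to produce a factor $\vare\|\nabla w\|_{L^2}^2$, and this is absorbed by viscosity precisely when $\vare\le C_1\nu$; the weak--strong trick for $n=3$ is also what the paper does. One small slip to fix: the trace of $\psi$ on $\partial(\vare\Omega)$ is \emph{not} constant in general (that would force $u$ to be tangent there), so take instead $\psi_\vare=\psi(0,t)$ (the paper's normalization) or any fixed value of $\psi$ near the obstacle---this still gives $|\psi-\psi_\vare|=O(\vare)$ on the shell, which is all you use.
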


Before we proceed with the proof, we require two technical lemmas.
To state the first lemma we must introduce some notation. As in the statement of the theorem, $u$ denotes
the smooth Euler solution in $\real^n$. In
dimension two, we denote by $\psi = \psi(x,t)$ the 
stream function for the velocity field $u$, chosen so that
$\psi(0,t) = 0$. In dimension three, $\psi$ denotes the unique
divergence free vector field vanishing for $x=0$ and  whose curl is
$u$. In other words, we set
\begin{equation*}
 \psi(x,t)=\int_{\R^2}\frac{(x-y)^\perp\cdot u(y,t)}{2\pi|x-y|^2}dy
+\int_{\R^2}\frac{y^\perp\cdot u(y,t)}{2\pi|y|^2}dy 
\end{equation*}
in dimension two  so that $u=\nabla^\perp\psi$ and 
\begin{equation*}
 \psi(x,t)=-\int_{\R^3}\frac{x-y}{4\pi|x-y|^3}\times u(y,t)dy
-\int_{\R^3}\frac{y}{4\pi|y|^3}\times u(y,t)dy
\end{equation*}
in dimension three so that $u=\curl\psi$. In both two and three dimensions one has that $\psi$ and
$\nabla\psi$ are uniformly bounded on the time interval $[0,T]$.


Let $R>0$ be such that the ball of radius $R$, centered at the origin, contains $\Omega$.
Let $\varphi = \varphi(r)$ be a smooth function on $\real_+$ such that
$\varphi(r) \equiv 0$ if $0 \leq r \leq R+1$, $\varphi \geq 0$ and $\varphi(r) \equiv 1$ 
if $r \geq R+2$. Set $\varphi^{\vare} = \varphi^{\vare}(x) = \varphi(|x|/\vare)$ and 
\[u^{\vare} = \nabla^{\perp}(\varphi^{\vare}\psi)\]
in dimension two and
\[u^{\vare} = \curl(\varphi^{\vare}\psi)\]
in dimension three. In both dimensions two and three, the vector field $u^{\vare}$ 
is divergence free and vanishes in a neighborhood of the boundary.

We also re-define the pressure $p=p(x,t)$ from the Euler equation in $\R^n$ with data $u_0$ so that $p(0,t) = 0$.  

\begin{lemma} \label{epsests} Fix $T>0$. There exist constants $K_i>0$, $i=1,\ldots,5$ such that,
for any $0 < \vare < \vare_0$ and any $0 \leq t < T$ we have:
\begin{enumerate}
\item $\|\nabla u^{\vare}\|_{L^2}^2 \leq K_1$,
\item $\|u^{\vare}\|_{L^{\infty}} \leq K_2$,
\item $\|u^{\vare}-u\|_{L^2} +\|u^{\vare}-\varphi^{\vare}u\|_{L^2} \leq K_3 \vare$,
\item $\|\nabla\psi \nabla\varphi^{\vare}\|_{L^{\infty}} + \|\psi \nabla^2 \varphi^{\vare} \|_{L^{\infty}} \leq K_4/\vare$,
\item $\|p \nabla \varphi^{\vare}\|_{L^2} + \|\partial_t \psi \nabla \varphi^{\vare}\|_{L^2} \leq K_5 \vare$.
\end{enumerate}
\end{lemma}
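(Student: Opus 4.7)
The plan is to reduce every item to a single product--rule decomposition of $u^\vare$, together with two elementary observations: the scaling of the cutoff $\varphi^\vare$, and the vanishing of $\psi$, $p$ and $\partial_t\psi$ at the origin.

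First I would write
\[
 u^\vare \;=\; \varphi^\vare u \;+\; w^\vare, \qquad w^\vare \;:=\; \begin{cases} \psi\,\nabla^\perp\varphi^\vare & (n=2),\\ \nabla\varphi^\vare\times\psi & (n=3),\end{cases}
\]
using $u=\nabla^\perp\psi$ (resp.\ $u=\curl\psi$). The two inputs I need throughout are: \emph{(i) Scaling.} $\nabla^k\varphi^\vare$ is supported in the annulus $A_\vare:=\{(R+1)\vare\le|x|\le(R+2)\vare\}$, which has volume $O(\vare^n)$, and $\|\nabla^k\varphi^\vare\|_{L^\infty}\lesssim\vare^{-k}$ for $k=0,1,2$. \emph{(ii) Vanishing at the origin.} The functions $\psi$, $\partial_t\psi$ and $p$ all vanish at $x=0$ (for $\psi$ by construction; for $p$ because it is normalized that way; for $\partial_t\psi$ because $\psi(0,t)=0$ for every $t$) and have gradients uniformly bounded on $[0,T]\times\R^n$: the bound for $\nabla\psi$ is the statement in the paper, and the bounds for $\nabla p$ and $\nabla\partial_t\psi$ follow respectively from $-\Delta p=\operatorname{tr}((\nabla u)^2)$ and $\partial_t u=-u\cdot\nabla u-\nabla p$ together with the explicit Biot--Savart formula for $\psi$. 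Consequently $|\psi|+|\partial_t\psi|+|p|\lesssim\vare$ on $A_\vare$.

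Given these inputs each bound becomes routine. Item (4) is just (i)$\times$(ii) pointwise. Item (5) follows from
\[
 \|f\,\nabla\varphi^\vare\|_{L^2} \;\lesssim\; \|f\|_{L^\infty(A_\vare)}\,\|\nabla\varphi^\vare\|_{L^\infty}\,|A_\vare|^{1/2} \;=\; O(\vare)\cdot O(\vare^{-1})\cdot O(\vare^{n/2}) \;=\; O(\vare^{n/2}),
\]
applied to $f=p$ and $f=\partial_t\psi$, which is $\le K_5\vare$ since $n\ge 2$. Item (2) is $\|u^\vare\|_{L^\infty}\le\|u\|_{L^\infty}+\|\psi\|_{L^\infty(A_\vare)}\|\nabla\varphi^\vare\|_{L^\infty}=O(1)+O(\vare)\cdot O(\vare^{-1})$. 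For (3) the identity $u^\vare-\varphi^\vare u=w^\vare$ gives a pointwise $O(1)$ field supported in $A_\vare$, hence of $L^2$ norm $O(\vare^{n/2})\le K_3\vare$; and $u^\vare-u=w^\vare+(\varphi^\vare-1)u$ picks up the additional term $(\varphi^\vare-1)u$, which is also bounded and supported on a set of volume $O(\vare^n)$. For (1) I would expand $\nabla u^\vare$ into terms of the four types $\varphi^\vare\nabla u$, $u\otimes\nabla\varphi^\vare$, $\nabla\psi\otimes\nabla\varphi^\vare$ and $\psi\,\nabla^2\varphi^\vare$: the first is bounded in $L^2$ by $\|\nabla u\|_{L^2(\R^n)}$, controlled by the smoothness of the Euler solution, while each of the other three has $L^2$ norm of order $\vare^{n/2-1}$, uniformly bounded for $n\ge 2$ (and benefiting, in the last case, from the $O(\vare)$ gain in $\psi$ to offset the extra derivative on $\varphi^\vare$).

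There is no real obstacle: this is a bookkeeping lemma. The only step that deserves honest care is establishing the uniform bounds on $\nabla p$ and $\nabla\partial_t\psi$ on $[0,T]\times\R^n$, since those are exactly what convert the vanishings $p(0,t)=0$ and $\partial_t\psi(0,t)=0$ into the pointwise $O(\vare)$ bound on $A_\vare$ that drives items (5) and (1). Both reduce to elementary singular--integral estimates applied to the smooth Euler velocity.
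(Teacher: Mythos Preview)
Your proposal is correct and follows essentially the same route as the paper: the same product-rule decomposition $u^\vare=\varphi^\vare u+w^\vare$, the same scaling observations on $\nabla^k\varphi^\vare$ and its support, and the same use of the vanishing of $\psi$, $p$, $\partial_t\psi$ at the origin to gain a factor of $\vare$ on $A_\vare$. If anything, you are slightly more explicit than the paper in two respects: you track the dimension via $\vare^{n/2}$ rather than writing the $n=2$ case and remarking that $n=3$ only improves matters, and you flag that the uniform bounds on $\nabla p$ and $\nabla\partial_t\psi$ are what actually justify the $O(\vare)$ pointwise control used in item~(5), whereas the paper simply asserts boundedness.
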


Above, we used the notation $\|\nabla\psi
\nabla\varphi^{\vare}\|_{L^{\infty}}=\sum_{i,j}\|\partial_i\psi
\partial_j\varphi^{\vare}\|_{L^{\infty}} $ in dimension two and 
$\|\nabla\psi
\nabla\varphi^{\vare}\|_{L^{\infty}}=\sum_{i,j,k}\|\partial_i\psi_k
\partial_j\varphi^{\vare}\|_{L^{\infty}} $ in dimension three. Similar
notations were used for
the other terms. 

\begin{proof}
Some of the inequalities above can be improved in dimension
three. However, it turns out that these improvements have no
effect on the statement of Theorem \ref{mainthm}. Therefore,  to
avoid giving separate proofs in dimension three we chose to state
these weaker estimates.

Recall that both $u$ and $\nabla u$ are uniformly bounded. 
First we write
\[\partial_i u^{\vare} = \partial_i \nabla^{\perp}(\varphi^{\vare}
\psi) = u \partial_i \varphi^{\vare}
+\partial_i\psi\nabla^\perp\varphi^\vare 
+ \psi \partial_i\nabla^\perp  \varphi^{\vare} + \varphi^{\vare}
\partial_iu\]
in dimension two and 
\[\partial_i u^{\vare} = \partial_i \curl(\varphi^{\vare}
\psi) = u \partial_i \varphi^{\vare} 
+\nabla\varphi^\vare\times\partial_i\psi
+ \partial_i\nabla \varphi^{\vare}\times\psi + \varphi^{\vare}
\partial_iu\]
in dimension three.
The supports of the first three terms of the right-hand sides of the
relations above are contained in the annulus $\vare(R+1) < |x| < \vare(R+2)$,
whose Lebesgue measure is $\mathcal{O}(\vare^2)$. Furthermore, 
$|\nabla\varphi^{\vare}| = \mathcal{O}(1/\vare)$, $|\nabla^2 \varphi^{\vare}| = \mathcal{O}(1/\vare^2)$
and $|\psi(x,t)| = \mathcal{O}(\vare)$ for $|x| < \vare(R+2)$, since $\psi(0,t) = 0$. 
Taking $L^2$ norms in the expressions above gives the first estimate.

Next we observe that $u^{\vare} = \varphi^{\vare} u + \psi
\nabla^{\perp} \varphi^{\vare}$ or $u^{\vare} = \varphi^{\vare} u +
\nabla \varphi^{\vare}\times\psi$. Clearly $\varphi^{\vare} u$ 
is bounded and to bound the second term, we use again that $\psi(0,t) = 0$,
which proves the second estimate. For the third estimate, observe that
$u^{\vare} - u$ and $u^{\vare}-\varphi^{\vare}u$ are bounded,
as we have just proved, and have support in the ball $|x| < \vare(R+2)$. For the fourth estimate,
we use again that $\psi(0,t) = 0$. The last estimate follows from two facts: that the functions
whose $L^2$-norm we are estimating have support on the ball $|x| < \vare(R+2)$ and that
they are both bounded, since $p(0,t) = 0$ and $\psi_t(0,t) = 0$.  
\end{proof}

We also require a modified Poincar\'{e} inequality, stated below. This is fairly standard, but we include
a sketch of the proof for completeness. 

\begin{lemma} \label{poincare}
Let $\Omega$ be the obstacle under consideration and let $R$ be such that $\Omega \subset B_R$. Consider the scaled
obstacles $\vare \Omega$ and the exterior domains $\Pi_{\vare}$. Then, if 
$W \in H^1_0(\Pi_{\vare})$ we have
\[\|W\|_{L^2(\Pi_{\vare} \cap B_{(R+2)\vare} )} \leq K_6 \,\vare \, \|\nabla W\|_{L^2(\Pi_{\vare} \cap B_{(R+2)\vare})}.\]
\end{lemma}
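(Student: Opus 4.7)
The plan is to reduce the estimate to a standard Poincar\'e-type inequality on a fixed bounded Lipschitz domain by the rescaling $y=x/\vare$. Setting $\widetilde W(y) = W(\vare y)$, the function $\widetilde W$ is defined on the $\vare$-independent domain $\Pi_1 \cap B_{R+2}$, where $\Pi_1 = \real^n\setminus\overline\Omega$, and a direct change of variables yields the two scaling identities
\[
\|W\|_{L^2(\Pi_\vare\cap B_{(R+2)\vare})}^2 = \vare^n\|\widetilde W\|_{L^2(\Pi_1\cap B_{R+2})}^2,
\]
\[
\|\nabla W\|_{L^2(\Pi_\vare\cap B_{(R+2)\vare})}^2 = \vare^{n-2}\|\nabla\widetilde W\|_{L^2(\Pi_1\cap B_{R+2})}^2.
\]
Thus, once the $\vare$-free inequality is proved, the factor $\vare$ in the lemma comes out automatically from the mismatch of the homogeneities.

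Next, since $W\in H^1_0(\Pi_\vare)$, I would observe that the trace of $\widetilde W$ on $\partial \Omega$ vanishes (while no condition is needed on $\partial B_{R+2}$). On the fixed bounded Lipschitz domain $\Pi_1\cap B_{R+2}$, the classical Poincar\'e--Friedrichs inequality then gives
\[
\|\widetilde W\|_{L^2(\Pi_1\cap B_{R+2})} \leq C\,\|\nabla\widetilde W\|_{L^2(\Pi_1\cap B_{R+2})},
\]
with a constant $C=C(\Omega,R)$ depending only on the reference geometry. This is the standard fact that a Poincar\'e inequality holds for $H^1$ functions whose trace vanishes on a part of the boundary of positive $(n-1)$-dimensional Hausdorff measure; it is proved by the usual contradiction argument using compactness of the embedding $H^1(\Pi_1\cap B_{R+2})\hookrightarrow L^2(\Pi_1\cap B_{R+2})$.

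Combining this inequality with the two scaling identities gives the conclusion with $K_6=C$. There is no real obstacle in this proof: all the $\vare$-dependence is absorbed into the scaling, and the only nontrivial ingredient is the reference-domain Poincar\'e inequality, which is entirely standard. The only mild point to be careful about is that the lemma constrains the trace of $W$ only on the inner boundary $\partial(\vare\Omega)$, so one must be sure to use a form of Poincar\'e that requires vanishing only on $\partial\Omega$ and not on the whole boundary of $\Pi_1\cap B_{R+2}$.
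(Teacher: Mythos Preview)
Your proposal is correct and follows essentially the same approach as the paper: reduce to the unscaled domain $\Pi_1\cap B_{R+2}$ via the change of variables $y=x/\vare$, and then invoke the Poincar\'e inequality there, valid because the trace vanishes on the inner boundary $\partial\Omega$ (the paper proves this unscaled inequality explicitly by the compactness/contradiction argument you allude to). The only cosmetic difference is the order of presentation---you scale first and then quote Poincar\'e, while the paper proves the $\vare=1$ case first and then scales.
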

  
\begin{proof}
The proof proceeds in two steps. First we establish the result in the case $\vare = 1$.
Suppose, by contradiction, that there exists a sequence $\{W^k\} \subset H^1_0(\Pi_1)$ 
such that $\|W^k\|_{L^2(\Pi_1 \cap B_{R+2})} > k \|\nabla W^k\|_{L^2(\Pi_1 \cap B_{R+2})}$. Set
\[V^k \equiv \frac{W^k}{\|W^k\|_{L^2(\Pi_1 \cap B_{R+2})}}.\]
Then $V^k \in H^1_0(\Pi_1)$, with unit $L^2$ norm
in $\Pi_1 \cap B_{R+2}$, while the $L^2$-norm of its gradient on $\Pi_1 \cap B_{R+2}$ vanishes as $k\to\infty$. Thus, passing to a subsequence
if necessary, $V^k \to V$, weakly in $H^1$ and strongly in $L^2$ (on $\Pi_1 \cap B_{R+2}$) so that 
$\|V\|_{L^2(\Pi_1 \cap B_{R+2})} = 1$ and $\nabla V = 0$. Since this set is connected, it follows that
$V$ is constant in $\Pi_1 \cap B_{R+2}$. By continuity of the trace, the trace of $V$ on $\Gamma = \partial \Pi_1$ must vanish, which shows that $V \equiv 0$ in $\Pi_1 \cap B_{R+2}$, a contradiction.   

We conclude the proof with a scaling argument. Let $W \in H^1_0(\Pi_{\vare})$ and set $Y = Y(x) = W(\vare x)$. 
Then $Y \in H^1_0(\Pi_1)$. Using the first step we deduce that there exists a constant $K_6$ such that 
$\|Y\|_{L^2(\Pi_1 \cap B_{R+2} )} \leq K_6 \|\nabla Y\|_{L^2(\Pi_1 \cap B_{R+2})}$.  Undoing the scaling we find:
\[\|Y\|_{L^2(\Pi_1 \cap B_{R+2} )}^2 =   \int_{\Pi_1 \cap B_{R+2}} |W(\vare x)|^2 \, dx   = 
\frac{\|W\|_{L^2(\Pi_{\vare} \cap B_{(R+2)\vare})}^2}{\vare^{n}};\]
\[\|\nabla Y\|_{L^2(\Pi_1 \cap B_{R+2})}^2 =  \int_{\Pi_1 \cap B_{R+2}} \vare^2|\nabla W \, (\vare x)|^2 \, dx  
 = \vare^{2-n}\|\nabla W\|_{L^2(\Pi_{\vare} \cap B_{(R+2)\vare})}^2.\]
The desired result follows immediately. 

\end{proof}

We are now ready to prove Theorem \ref{mainthm}. 

{\bf Proof of Theorem \ref{mainthm}:}
We begin by noting that, since $u$ is a smooth solution of the Euler equations in $\real^n \times [0,T]$, it follows
that 
\begin{equation} \label{intest}
\|u^{\neps}(\cdot,t)-u(\cdot,t)\|_{L^2(\vare \Omega)} \equiv \|u(\cdot,t)\|_{L^2(\vare\Omega)} \leq C \vare \leq C\sqrt{\nu},
\end{equation}
if $\vare < C \nu$. Hence it remains only to estimate the $L^2$-norm of the difference in $\Pi_{\vare}$, which we 
do below.

We first give the proof in two dimensions and then we 
indicate how the proof should be adapted to three dimensions.
\subsection{Case $n=2$}
The vector field $u^{\vare}$ is divergence free and satisfies the equation
\[u^{\vare}_t = -\varphi^{\vare}u \cdot \nabla u - \varphi^{\vare} \nabla p 
+ \partial_t \psi \nabla^{\perp} \varphi^{\vare}.\]
We set $W^{\neps} \equiv u^{\neps} - u^{\vare}$. The vector field
$W^{\neps}$ is divergence free, vanishes on the boundary and satisfies:
\[\partial_t W^{\neps} - \nu \Delta W^{\neps} = - u^{\neps} \cdot \nabla u^{\neps} - \nabla p^{\neps} + \nu \Delta u^{\vare} + 
\varphi^{\vare} u \cdot \nabla u + \varphi^{\vare} \nabla p - \partial_t \psi \nabla^{\perp} \varphi^{\vare}.\]

We perform an energy estimate, multiplying this equation by $W^{\neps}$ and integrating over $\Pi_{\vare}$.
We obtain
\begin{multline}\label{enid}
\frac{1}{2} \frac{d}{dt} \|W^{\neps}\|_{L^2}^2 + \nu \|\nabla W^{\neps}\|_{L^2}^2 =
-\nu \int_{\Pi_{\vare}} \nabla W^{\neps} \cdot \nabla u^{\vare}\, dx \\  
- \int_{\Pi_{\vare}} W^{\neps} \cdot [(u^{\neps} \cdot \nabla) u^{\neps}]\, dx +
\int_{\Pi_{\vare}} W^{\neps} \cdot [(\varphi^{\vare}u \cdot \nabla) u]\, dx\\
+ \int_{\Pi_{\vare}} W^{\neps} \cdot \varphi^{\vare} \nabla p\, dx 
- \int_{\Pi_{\vare}} W^{\neps} \cdot \partial_t \psi \nabla^{\perp} \varphi^{\vare}\, dx.
\end{multline}
We will examine each one of the five terms on the right-hand-side of identity \eqref{enid}.
We look at the first term. We use Cauchy-Schwarz and Young's inequalities followed by Lemma \ref{epsests}, item (1), to obtain
\begin{equation} \label{term1}
\Bigl|\nu \int_{\Pi_{\vare}} \nabla W^{\neps} \cdot \nabla u^{\vare} \, dx \Bigr| \leq 
\frac{\nu}{2}\Bigl(\|\nabla W^{\neps}\|_{L^2}^2 + K_1 \Bigr).
\end{equation}
 
Next we look at the second and third terms together. We write
\begin{align*}
|\cal{I}| \equiv
& \Bigl| - \int_{\Pi_{\vare}} W^{\neps} \cdot [(u^{\neps} \cdot \nabla) u^{\neps}]\, dx +
\int_{\Pi_{\vare}} W^{\neps} \cdot [(\varphi^{\vare}u \cdot \nabla)
u]\, dx \Bigr|\\
=& \Bigl| - \int_{\Pi_{\vare}} W^{\neps} \cdot [((W^{\neps}+u^{\vare}) \cdot \nabla)(W^{\neps}+u^{\vare})]\, dx +
\int_{\Pi_{\vare}} W^{\neps} \cdot [(\varphi^{\vare}u \cdot \nabla)
u]\, dx \Bigr|\\
=& \Bigl| - \int_{\Pi_{\vare}} W^{\neps} \cdot [(W^{\neps} \cdot \nabla)u^{\vare}]\, dx 
- \int_{\Pi_{\vare}} W^{\neps} \cdot [(u^{\vare} \cdot \nabla)u^{\vare}]\, dx \\
&\hskip 5cm + \int_{\Pi_{\vare}} W^{\neps} \cdot [(\varphi^{\vare}u \cdot \nabla)
u]\, dx \Bigr|,
\end{align*}
where we used the fact that $\int W^{\neps} \cdot [((W^{\neps}+u^{\vare}) \cdot \nabla) W^{\neps}] = 0$. Finally,  
we add and subtract $\int W^{\neps} \cdot [(u^{\vare} \cdot \nabla) u]$ to obtain
{\allowdisplaybreaks
\begin{align*}
|\cal{I}| =& \Bigl|\int_{\Pi_{\vare}} \bigl\{ -W^{\neps} \cdot (W^{\neps} \cdot \nabla u^{\vare}) + 
W^{\neps} \cdot [u^{\vare} \cdot \nabla (u-u^{\vare})] \\
&\hskip 5cm + W^{\neps} \cdot [((\varphi^{\vare}u-u^\vare) \cdot \nabla)
u]\bigr\}\, dx   \Bigr|\\
\leq& \Bigl|\int_{\Pi_{\vare}} W^{\neps} \cdot (W^{\neps} \cdot \nabla u^{\vare}) \, dx \Bigr| + \Bigl|\int_{\Pi_{\vare}} W^{\neps} \cdot [u^{\vare} \cdot \nabla (u-u^{\vare})]\, dx \Bigr| \\
&\hskip 5cm + \Bigl|\int_{\Pi_{\vare}} W^{\neps} \cdot [((\varphi^{\vare}u-u^\vare) \cdot \nabla)
u]\, dx   \Bigr|\\
=&\Bigl|\int_{\Pi_{\vare}} W^{\neps} \cdot (W^{\neps} \cdot \nabla u^{\vare})\, dx \Bigr| + 
\Bigl|\int_{\Pi_{\vare}} (u-u^{\vare}) \cdot [(u^{\vare} \cdot \nabla)W^{\neps}]\, dx \Bigr| \\
&\hskip 5cm + \Bigl|\int_{\Pi_{\vare}} W^{\neps} \cdot [((\varphi^{\vare}u-u^\vare) \cdot \nabla)
u]\, dx   \Bigr|\\
\leq&  \Bigl|\int_{\Pi_{\vare}} (W^{\neps} \cdot \nabla u^{\vare}) \cdot W^{\neps} \, dx 
\Bigr|+  \|u-u^{\vare}\|_{L^2} \|u^{\vare}\|_{L^{\infty}} \|\nabla W^{\neps}\|_{L^2} \\
&\hskip 5cm +\|W^{\neps}\|_{L^2} \|\varphi^{\vare}u-u^{\vare}\|_{L^2}\|\nabla u\|_{L^\infty}.
\end{align*}
}

For each $i=1,2$ we have that
\begin{equation}
  \label{dij}
\partial_{i} u^{\vare} = \partial_i\psi \nabla^{\perp} \varphi^{\vare} +
\psi \partial_{i}\nabla^{\perp} \varphi^{\vare} + \partial_{i} \varphi^{\vare} u +
\varphi^{\vare} \partial_{i} u.   
\end{equation}

Therefore,
\begin{multline}\label{term23a}
|\cal{I}| \leq  
(\|\nabla\psi  \nabla\varphi^{\vare}\|_{L^{\infty}} + 
\|\psi \nabla^2 \varphi^{\vare} \|_{L^{\infty}}) \|W^{\neps}\|_{L^2(A_{\vare})}^2+
 \|\varphi^{\vare} \nabla u \|_{L^{\infty}} \|W^{\neps}\|_{L^2}^2 
\\
+\|u-u^{\vare}\|_{L^2} \|u^{\vare}\|_{L^{\infty}} \|\nabla W^{\neps}\|_{L^2} + 
\|W^{\neps}\|_{L^2} \|\varphi^{\vare}u-u^{\vare}\|_{L^2}\|\nabla u\|_{L^\infty},
\end{multline}
where $A_{\vare}$ is the set $\Pi_{\vare} \cap B_{(R+2)\vare}$, which contains the support of
$\nabla\varphi^{\vare}$.

Hence, using Lemma \ref{epsests}, items (2), (3) and (4), together with Lemma \ref{poincare}, in the inequality \eqref{term23a}, we find
\begin{multline}\label{term23}
|\cal{I}| \leq  \frac{K_4}{\vare} \vare^2 K_6^2 \| \nabla W^{\neps}\|_{L^2}^2 
+ \|\varphi^{\vare} \nabla u \|_{L^{\infty}} \|W^{\neps}\|_{L^2}^2 \\
+ K_2 K_3 \vare \| \nabla W^{\neps}\|_{L^2}^2
+ K_3 \vare \|\nabla u\|_{L^\infty}\|W^{\neps}\|_{L^2}.
\end{multline}

Next we look at the fourth and fifth terms in \eqref{enid}. Recall that we chose the pressure $p$ in such a way that 
$p(0,t)=0$. We find
\[|\cal{J}| \equiv \Bigl| \int_{\Pi_{\vare}} W^{\neps} \cdot \varphi^{\vare} \nabla p\, dx 
- \int_{\Pi_{\vare}} W^{\neps} \cdot \partial_t \psi \nabla^{\perp} \varphi^{\vare}\, dx \Bigr|\]
\[\leq \Bigl| \int_{\Pi_{\vare}} W^{\neps} \cdot p \nabla \varphi^{\vare}\, dx \Bigr| 
+ \Bigl|\int_{\Pi_{\vare}} W^{\neps} \cdot \partial_t \psi \nabla^{\perp} \varphi^{\vare}\, dx \Bigr|.\]
We estimate each term above to obtain, using Lemma \ref{epsests} item (5),
\begin{equation} \label{term45}
|\cal{J}| \leq (\|p\nabla\varphi^{\vare}\|_{L^2} + \|\partial_t \psi \nabla^{\perp} \varphi^{\vare}\|_{L^2})
\|W^{\neps}\|_{L^2} \leq K_5 \vare \|W^{\neps}\|_{L^2}.
\end{equation}

We use estimates \eqref{term1}, \eqref{term23} and \eqref{term45} in the energy identity \eqref{enid} to deduce that
\begin{equation} \label{bliblu}
\frac{1}{2}\frac{d}{dt}\|W^{\neps}\|_{L^2}^2 + \nu \|\nabla W^{\neps}\|_{L^2}^2 \leq
\frac{\nu}{2} \|\nabla W^{\neps}\|_{L^2}^2 + \frac{\nu}{2}K_1 + K_2 K_3 \vare \|\nabla W^{\neps}\|_{L^2} 
\end{equation}
\[+ K_4 K_6^2 \vare \|\nabla W^{\neps}\|_{L^2}^2 +
\|\varphi^{\vare} \nabla u \|_{L^{\infty}} 
 \|W^{\neps}\|_{L^2}^2 + \vare (K_5 +K_3\|\nabla u\|_{L^\infty})\|W^{\neps}\|_{L^2}\]
\[\leq \frac{\nu}{2} \|\nabla W^{\neps}\|_{L^2}^2 + \frac{\nu}{2}K_1 +  \frac{\nu}{4} \|\nabla W^{\neps}\|_{L^2}^2 
+ K_2^2 K_3^2 \frac{\vare^2}{\nu} 
+ K_4 K_6^2 \vare \|\nabla W^{\neps}\|_{L^2}^2 \]
\[+ K_0 \|W^{\neps}\|_{L^2}^2  + \frac{\|W^{\neps}\|_{L^2}^2}{2} +
\frac{\widetilde{K}_5^2 \vare^2}{2}.\]
Above we have used the notation $K_0 = \sup_{\vare}  \|\varphi^{\vare}
\nabla u \|_{L^{\infty}}$ and $\widetilde{K}_5=K_5 +K_3\|\nabla u\|_{L^\infty} $.

At this point we choose $\vare$ so that 
\begin{equation} \label{smcond}
0< \vare < \min{\Bigl\{\vare_0 \, , \, \frac{\nu}{8 K_4 \, K_6^2} \Bigr\}}.
\end{equation}

With this choice, letting $y=y(t)=\|W^{\neps}\|_{L^2}^2$, we obtain
\begin{equation}
  \label{dty}
\frac{dy}{dt} \leq \nu K_1 + 2K_2^2 K_3^2 \frac{\vare^2}{\nu} + \widetilde{K}_5^2 \vare^2 + (2K_0 + 1) y\leq C'_1\nu + C'_2 y.  
\end{equation}

From Gronwall's inequality it follows that
\begin{equation} \label{gron}
\|u^{\neps} - u^{\vare}\|^2_{L^2(\Pi_{\vare})} \leq C(T,u_0,\Omega) \left( \nu + 
\|\vartheta^{\vare} - u_0^{\vare}\|^2_{L^2(\Pi_{\vare})} \right). 
\end{equation} 

If $\|\vartheta^{\vare} - u_0\|_{L^2} \to 0$ then  it follows from \eqref{gron} together with 
Lemma \ref{epsests}, item (3), and from \eqref{intest}, that $\|u^{\neps} - u\|_{L^2} \to 0$, 
as desired, where the constant $C_1$ can be chosen to be $(8 K_4 K_6^2)^{-1}$.
If we assume further that $\|\vartheta^{\vare} - u_0\|_{L^2}= \mathcal{O}(\sqrt{\nu})$
then the second part of the statement of Theorem \ref{mainthm} easily follows. This concludes the proof in the 
two dimensional case.

\subsection{Case $n=3$}

The proof in dimension three is similar to the previous one. There are
two differences: notation and the justification that we can multiply
the equation of $W^{\neps}$ by $W^{\neps}$. 

First, about notation. One has to replace everywhere the term
$\partial_t\psi\nabla^\perp\varphi^\vare$ by $
\nabla\varphi^\vare\times\partial_t \psi$ and also relation  \eqref{dij} becomes
\begin{equation*}
\partial_{i} u^{\vare} = \nabla\varphi^{\vare}\times\partial_i\psi +
\partial_{i}\nabla \varphi^{\vare}\times\psi + \partial_{i} \varphi^{\vare} u +
\varphi^{\vare} \partial_{i} u.   
\end{equation*}
These two modifications are just changes of notations. These new terms
are of the same type as the old ones, so the estimates that follow are
not affected.

Second, we multiplied the equation of
$W^{\neps}$ by $W^{\neps}$. The solution $u^{\neps}$, and
therefore $W^{\neps}$ too,  is not better
than $L^\infty(0,T;L^2(\Pi_\vare))\cap L^2(0,T;H^1(\Pi_\vare))$. But it is
well-known that some of the trilinear terms that appear  when
multiplying the equation of
$W^{\neps}$ by $W^{\neps}$ are not well defined in dimension three
with this regularity only. In other words, one cannot multiply
directly the equation of $W^{\neps}$ by $W^{\neps}$. Nevertheless,
there is a classical trick that allows us to perform this multiplication if
the weak solution $u^{\neps}$ verifies the energy inequality. What we
are trying to do, is to subtract the equation of $u^\vare$ from the
equation of $u^{\neps}$ and to multiply the result by
$u^{\neps}-u^\vare$. This is the same as multiplying the equation of
$u^{\neps}$ by $u^{\neps}$, adding the equation of $u^\vare$ times
$u^\vare$ and subtracting the equation of $u^{\neps}$ times
$u^\vare$ and the equation of $u^\vare$ times $u^{\neps}$. Since
$u^\vare$ is smooth, all these operations are legitimate except for
the multiplication of the equation of $u^{\neps}$ by
$u^{\neps}$. Formally, multiplying the equation of $u^{\neps}$ by
$u^{\neps}$ and integrating in space and time from 0 to $t$ yields the
energy equality, \textit{i.e.} relation \eqref{energ}  where the sign $\leq$ is
replaced by =. Since we assumed that the energy inequality holds true, the above
operations are justified provided that the relation we get at the end
is an inequality instead of an equality. But an inequality is, of
course, sufficient for our purpose. Finally, to be completely
rigorous, one has to integrate in time from the begining. That is, we
would obtain at the end relation \eqref{dty} integrated in time. Clearly, the
result of the application of the Gronwall lemma in \eqref{dty} is the
same as in \eqref{dty} integrated in time. This completes the proof in
dimension three. 

{\bf Remark:} The proof above is closely related to the proof of Kato's criterion
for the vanishing viscosity limit in bounded domains, see \cite{Kato83}. Both results
are based on estimating the difference between Navier-Stokes solutions and Euler solutions
by means of energy methods. In Kato's argument, the difference is estimated in terms of 
the Navier-Stokes solution, on which Kato's criterion was imposed. In contrast, our proof
estimates the difference in terms of the full-space Euler solution, which is smooth in the
context of interest.

\section{Compactly supported initial vorticity}

Now that we are in possession of Theorem \ref{mainthm} we will examine two 
asymptotic problems for which we can prove the convergence condition on the initial 
velocity.  We focus on flows with compactly supported vorticity,  and the diameter
of the support of vorticity becomes the order one length scale, relative to which the 
obstacle is small. 

Let us begin with the three dimensional case. We consider an initial vorticity 
$\omega_0$ which is assumed to be smooth, compactly supported in $\real^3\setminus\{0\}$, 
and divergence-free. Let $\vare > 0$ be sufficiently small so that the support of $\omega_0$ 
is contained in $\Pi_{\vare}$. The domain $\Pi_{\vare}$ is assumed simply connected so 
that there exists a unique divergence-free vector field, tangent to $\partial \Pi_{\vare}$, 
in $L^2(\Pi_{\vare})$, whose curl is $\omega_0$, see, for example, \cite{iftimie}.  
We take $\vartheta^{\vare}$ to be this unique vector field. We take $u_0$ to be the 
unique divergence-free vector field in $L^2(\real^3)$ whose curl is $\omega_0$, 
given by the full space Biot-Savart law. 

In \cite{iftimie}, D. Iftimie and J. Kelliher studied the small obstacle asymptotics for viscous
flow in $\Pi_{\vare}$, for fixed viscosity, in three dimensions. They proved that the 
small obstacle limit converges to the appropriate Leray solution of the Navier-Stokes 
equations in the full space. One important ingredient in their proof was precisely to
verify strong convergence of the initial data; in our notation, Iftimie and Kelliher proved that
\[\|\vartheta^{\vare} - u_0\|_{L^2(\real^3)} =O(\vare^{\frac32}),\]
as $\vare \to 0$. 

We may hence apply Theorem \ref{mainthm} to obtain the following corollary. 

\begin{corollary} Let $\omega_0 \in C^{\infty}_c(\real^3 \setminus \{0\})$ and consider
$u_0$ and $\vartheta^{\vare}$ defined as above. Fix $T>0$ and assume that the solution 
$u = u(x,t)$ of the incompressible Euler equations in $\real^3$, with initial velocity 
$u_0$, exists up to time $T$. Let $u^{\neps}$ be a Leray solution of \eqref{neps} with 
initial velocity $\vartheta^{\vare}$. Then there exist constants 
$C_1 = C_1(\Omega,\omega_0,T) > 0$ and $C_2=C_2(\Omega,\omega_0,T) > 0$ such that 
\[\|u^{\neps}(\cdot,t) - u(\cdot,t)\|_{L^2(\real^3)} \leq C_2 \sqrt{\nu},\]
for all $0<\vare<C_1 \nu$ and all $0\leq t \leq T$.
\end{corollary}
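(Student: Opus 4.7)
The plan is to combine the Iftimie--Kelliher estimate on the initial data with the quantitative part of Theorem \ref{mainthm}. Concretely, $u_0$ is determined by $\omega_0$ through the full-space Biot--Savart law, and $\vartheta^{\vare}$ is determined by $\omega_0$ through the Biot--Savart operator in the exterior domain $\Pi_{\vare}$ (which is well-defined since $\Pi_{\vare}$ is simply connected and $\omega_0$ is divergence-free and compactly supported away from the origin, so in particular supported in $\Pi_{\vare}$ for all sufficiently small $\vare$). The result cited from \cite{iftimie} then gives
\[
\|\vartheta^{\vare}-u_0\|_{L^2(\real^3)}=O(\vare^{3/2})\quad\text{as }\vare\to 0.
\]

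First, I would verify that the hypotheses of Theorem \ref{mainthm} are satisfied in this setting: the initial velocity $\vartheta^{\vare}$ is divergence-free, tangent to $\partial\Pi_{\vare}$, and belongs to $L^2(\Pi_{\vare})$ by construction, and by the displayed estimate above we do have $\|\vartheta^{\vare}-u_0\|_{L^2(\real^3)}\to 0$. Thus Theorem \ref{mainthm} furnishes a constant $C_1=C_1(\Omega,u_0,T)$; since $u_0$ is itself a function of $\omega_0$ (and the fixed full-space Biot--Savart law), this constant may equivalently be written as $C_1(\Omega,\omega_0,T)$.

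Next, I would observe that under the obstacle--viscosity constraint $\vare<C_1\nu$ (which is in force in the conclusion of Theorem \ref{mainthm}), one has
\[
\|\vartheta^{\vare}-u_0\|_{L^2(\real^3)}\leq K\,\vare^{3/2}\leq K\,(C_1\nu)^{3/2}\leq K'\sqrt{\nu}
\]
for $\nu$ (and hence $\vare$) bounded by some absolute constant depending on $T,u_0,\Omega$. In particular, the rate assumption $\|\vartheta^{\vare}-u_0\|_{L^2(\real^3)}=O(\sqrt{\nu})$ of the second (quantitative) part of Theorem \ref{mainthm} is automatic — in fact it is satisfied with substantial room to spare — so the theorem yields a constant $C_2=C_2(\Omega,\omega_0,T)$ such that
\[
\|u^{\neps}(\cdot,t)-u(\cdot,t)\|_{L^2(\real^3)}\leq C_2\sqrt{\nu}
\]
for all $0<\vare<C_1\nu$ and all $0\leq t\leq T$, which is precisely the claim.

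There is essentially no hard step here: the proof is just an invocation of Theorem \ref{mainthm} and of the initial-data estimate from \cite{iftimie}. The only mildly subtle point is checking that the relevant constants depend only on the data $(\Omega,\omega_0,T)$ and not on $\vare$ or $\nu$ — which follows because $u_0$ is a fixed full-space vector field determined by $\omega_0$ alone, and the smoothness/existence interval of its Euler evolution is an intrinsic property of $u_0$.
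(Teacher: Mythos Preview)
Your proposal is correct and matches the paper's approach exactly: the paper simply states the Iftimie--Kelliher initial-data estimate $\|\vartheta^{\vare}-u_0\|_{L^2(\real^3)}=O(\vare^{3/2})$ and then writes ``We may hence apply Theorem \ref{mainthm} to obtain the following corollary,'' giving no further proof. Your write-up is, if anything, more careful than the paper in spelling out why the $O(\vare^{3/2})$ rate implies the $O(\sqrt{\nu})$ hypothesis under the constraint $\vare<C_1\nu$.
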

 
Next we discuss at length the case $n=2$. In dimension two the exterior domain is no
longer simply connected. This means that the vorticity formulation of the Euler equations
is incomplete, and we must specify the harmonic part of the initial velocity as well as
the initial vorticity, see \cite{iln03} for a thorough discussion of this issue.  
To specify the asymptotic problem we wish to consider, we must choose the initial data
for \eqref{neps}.

Let $\omega_0$ be smooth and compactly supported in $\real^2 \setminus \{0\}$. Let $K$ denote
the operator associated with the Biot-Savart law in the full plane and set $H = x^{\perp}/(2\pi|x|^2)$,
to be its kernel. Let $K^{\vare}$ be the operator associated with the Biot-Savart in $\Pi_{\vare}$, i.e.,
$K^{\vare} = \nabla^{\perp} \Delta^{-1}_{\Pi_{\vare}}$, where $\Delta_{\Pi_{\vare}}$ is the
Dirichlet Laplacian in $\Pi_{\vare}$. Let $H^{\vare}$ be the generator of the
harmonic vector fields in $\Pi_{\vare}$, normalized so that its circulation around
$\partial \Pi_{\vare}$ is one. The divergence-free vector fields in $\Pi^{\vare}$ with curl equal to $\omega_0$ are
of the form $K^{\vare}[\omega_0] + \alpha H^{\vare}$, with $\alpha \in \real$, see \cite{iln03}. 
In \cite{iln06} the authors studied the asymptotic behavior, as $\vare \to 0$, of solutions of \eqref{neps} with
$\nu$ fixed and initial velocity $K^{\vare}[\omega_0] + \alpha H^{\vare}$. It was shown in \cite{iln06}
that $u^{\neps}$ converges to a solution of the Navier-Stokes equations in the full plane with
initial data $K[\omega_0] + (\alpha - m)H$, where $m = \int \omega_0$, as long as $|\alpha - m|$ is sufficiently
small. 

For the vanishing viscosity limit, we must consider only the case $\alpha = m$. There are two reasons
for this. First, $K^{\vare}[\omega_0] + \alpha H^{\vare}$ converges weakly to 
$K[\omega_0] + (\alpha - m)H$ in distributions, see Lemma 10 in \cite{iln06}, but, as we shall see,
this convergence is not strong in $L^2$ (see Remark 1 following the proof of the next lemma). 
Second, one cannot expect solutions of the Euler equations in the full plane with initial velocity 
$K[\omega_0] + (\alpha - m)H$ to be smooth (even existence is not clear) unless $\alpha = m$.   

In view of this discussion, set $u_0 = K[\omega_0]$ and $\vartheta^{\vare} = K^{\vare}[\omega_0] + m H^{\vare}$.
With this notation, we can prove strong convergence in $L^2$ of the initial data, as follows. 

\begin{lemma} \label{indata}
Fix $\vare_0$ such that the support of $\omega_0$ does not intersect $\Omega_{\vare}$ for 
any $\vare < \vare_0$. There exists a constant $C>0$, depending on
$\Omega$ and $\omega_0$ 
such that 
\[\|\vartheta^{\vare} - u_0\|_{L^2(\real^2)} \leq C \vare.\]
\end{lemma}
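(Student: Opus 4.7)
The plan is to express $\vartheta^{\vare} - u_0$ restricted to $\Pi_{\vare}$ as the gradient of a single-valued harmonic function that solves a small-data exterior Neumann problem, and then to exploit the conformal invariance of the two-dimensional Dirichlet integral.

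Both $\vartheta^{\vare} = K^{\vare}[\omega_0] + m H^{\vare}$ and $u_0 = K[\omega_0]$ are smooth, divergence-free, and carry curl equal to $\omega_0$ on $\Pi_{\vare}$, so $Z^{\vare} := \vartheta^{\vare} - u_0|_{\Pi_{\vare}}$ is smooth, divergence-free, and curl-free on $\Pi_{\vare}$. I would next verify that $Z^{\vare}$ has zero circulation around $\partial(\vare \Omega)$: for $u_0$ this follows from Stokes' theorem in $\vare \Omega$, since $\omega_0$ is supported away from the obstacle; for $\vartheta^{\vare}$ one checks that the circulation of $K^{\vare}[\omega_0]$ around $\partial(\vare \Omega)$ is $-m$ (its stream function, being the Dirichlet inverse of $\omega_0$, forces $K^{\vare}[\omega_0]=O(|x|^{-2})$ at infinity, so Stokes in $\Pi_{\vare} \cap B_R$ followed by $R \to \infty$ gives $-\int \omega_0 = -m$ on the inner boundary) and this is cancelled by the $+m$ contribution from $m H^{\vare}$. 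Since $\Pi_{\vare}$ has a single nontrivial loop, $Z^{\vare} = \nabla h^{\vare}$ for a smooth, single-valued harmonic function $h^{\vare}$, normalized to vanish at infinity.

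The tangency of $\vartheta^{\vare}$ then turns this into an exterior Neumann problem: $\Delta h^{\vare} = 0$ in $\Pi_{\vare}$ and $\partial_{\nu} h^{\vare} = -u_0 \cdot \nu$ on $\partial(\vare \Omega)$, with the compatibility $\int \partial_{\nu} h^{\vare}\,ds = 0$ following from $\int u_0 \cdot \nu\,ds = \int_{\vare \Omega} \dive u_0 = 0$. Setting $\tilde h(y) := h^{\vare}(\vare y)$ rescales the problem to the fixed domain $\Pi_1$ with boundary data $-\vare\, u_0(\vare y) \cdot \nu$, which is $O(\vare)$ in $L^{\infty}(\partial \Omega)$ because $u_0$ is smooth near the origin. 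A standard exterior Neumann estimate on $\Pi_1$ then gives $\|\nabla \tilde h\|_{L^2(\Pi_1)} = O(\vare)$, and the scale-invariance of the two-dimensional Dirichlet integral yields $\|\nabla h^{\vare}\|_{L^2(\Pi_{\vare})} = \|\nabla \tilde h\|_{L^2(\Pi_1)} = O(\vare)$. Combined with the trivial bound $\|u_0\|_{L^2(\vare \Omega)} \leq \|u_0\|_{L^{\infty}(B_{R\vare})} |\vare \Omega|^{1/2} = O(\vare)$ coming from the zero extension of $\vartheta^{\vare}$ inside the obstacle, this completes the argument.

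The step that requires the most care is the identification of the circulations, in particular the claim that $K^{\vare}[\omega_0]$ has circulation $-m$ around $\partial(\vare \Omega)$, which rests on the decay of this Biot--Savart field at infinity; once that is settled, the rest is a clean application of an elliptic estimate on the rescaled problem, with the two-dimensional conformal invariance of the Dirichlet integral being exactly what turns the $O(\vare)$ size of the boundary data into an $O(\vare)$ bound for the full $L^2$ norm.
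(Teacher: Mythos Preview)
Your argument is correct, and it is genuinely different from the paper's proof. The paper works entirely at the level of integral kernels: it writes $K^{\vare}$ and $H^{\vare}$ explicitly through a conformal map $T:\Pi_1\to\{|z|>1\}$, splits $\vartheta^{\vare}-u_0$ into a ``reflection'' piece $\mathfrak{B}^{\vare}$ (coming from the image term $(T(y/\vare))^{\ast}$) and a ``distortion'' piece $\mathfrak{A}^{\vare}$ (measuring how far $T$ is from a dilation), and then estimates each by hand using the expansion $T(z)=\beta z+h(z)$ with $h$ bounded and $|h(z_1)-h(z_2)|\leq C|z_1-z_2|/(|z_1||z_2|)$. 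Your approach bypasses all of this: you recognise $\vartheta^{\vare}-u_0$ on $\Pi_{\vare}$ as the gradient of a harmonic function solving an exterior Neumann problem with $O(\vare)$ data after rescaling, and then invoke the boundedness of the Neumann-to-Dirichlet map on the fixed domain $\Pi_1$ together with the scale invariance of the Dirichlet integral in two dimensions.

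What each approach buys: the paper's computation is fully self-contained and in fact yields more, namely pointwise decay estimates on $\vartheta^{\vare}-u_0$ of the form $O(\vare^2/|x|^2)$ at infinity, which your argument does not see. Your argument is shorter and more conceptual, and it makes transparent \emph{why} the rate is exactly $O(\vare)$: the boundary data scales like $\vare\,u_0(\vare\,\cdot)\cdot\nu$, and the 2D Dirichlet energy is conformally invariant, so no factor is gained or lost in undoing the rescaling. The one black box you rely on is the exterior Neumann estimate $\|\nabla\tilde h\|_{L^2(\Pi_1)}\leq C\|g\|_{L^2(\partial\Omega)}$ for mean-zero $g$; this is standard potential theory (energy identity plus boundedness of the Neumann-to-Dirichlet map, or a single-layer representation), but it would be worth a one-line citation or justification in a final write-up. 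Your identification of the circulations, which you flag as the delicate step, is fine: the $O(|x|^{-2})$ decay of $K^{\vare}[\omega_0]$ indeed follows from the Dirichlet condition on the stream function, exactly as you indicate.
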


\begin{proof}
We begin the proof with a construction whose details can be found in \cite{iln03}. In Section 2 of \cite{iln03},
an explicit formula for both $K^{\vare}$ and $H^{\vare}$ can be found in terms of a conformal
map $T$, which takes $\Pi$ into the exterior of the unit disk centered at zero. The construction of
$T$ and its behavior near infinity are contained in Lemma 2.1 of \cite{iln03}.  Using identities
(3.5) and (3.6) in \cite{iln03}, we have that the vector
field $H^{\vare}$ can be written explictly as
\[H^{\vare} = H^{\vare}(x) = \frac{1}{2\pi\vare} DT^t(x/\vare)\, \frac{(T(x/\vare))^{\perp}}{|T(x/\vare)|^2}.\]
and the operator $K^{\vare}$ can be written as an integral operator with kernel $\cal{K}^{\vare}$, given by
\[\cal{K}^{\vare} = \frac{1}{2\pi\vare} DT^t(x/\vare) 
\Bigl(\frac{(T(x/\vare) - T(y/\vare))^{\perp}}{|T(x/\vare) - T(y/\vare)|^2}
- \frac{(T(x/\vare) - (T(y/\vare))^{\ast})^{\perp}}{|T(x/\vare) - (T(y/\vare))^{\ast}|^2}\Bigr),\]
where $x^{\ast} = x/|x|^2$ denotes the inversion with respect to the unit circle.
Furthermore, we recall Theorem 4.1 of \cite{iln03}, from which we obtain 
\begin{equation*} 
\|\vartheta^{\vare}\|_{L^{\infty}(\Pi_{\vare})} \leq C\|\omega_0\|_{L^{\infty}}^{1/2} \|\omega_0\|_{L^1}^{1/2},
\end{equation*}
for some constant $C>0$.

To understand the behavior for $\vare$ small in the expressions above, we need to understand the behavior
of $T(x)$ for large $x$. We use Lemma 1 in \cite{iln06b}, which is a more detailed version
of Lemma 2.1 in \cite{iln03}, to find that there exists a constant $\beta>0$ such that
\begin{equation}
  \label{**}
T(x/\vare) = \beta x \vare^{-1} + h(x/\vare),  
\end{equation}
with $h=h(x)$ a bounded, holomorphic function on $\Pi_1$ satisfying $|Dh(x)| \leq C/|x|^2$. 
Therefore,
\begin{equation}
  \label{*}
| DT(x/\vare) - \beta \mathbb{I} | \leq  C \frac{\vare^2}{|x|^2}.  
\end{equation}

We will need a further estimate on the bounded holomorphic function $h = h(z) = T(z)-\beta z$, namely that
\begin{equation}
  \label{relh}
|h(z_1) - h(z_2)| \leq C\frac{|z_1 - z_2|}{|z_1||z_2|},  
\end{equation}
for some constant $C>0$ independent of $z_1$, $z_2$. This estimate holds since, by construction (see Lemma 2.1 in 
\cite{iln03}), we have that $h(z) = g(1/z)$ with $g$ a holomorphic
function on $(\Pi_1)^*$, whose derivatives are bounded in the closure
of $(\Pi_1)^*$. Here, $(\Pi_1)^*$ denotes the image of $\Pi_1$ through
the mapping $x\mapsto x^{\ast} = x/|x|^2$ to which we add
$\{0\}$. Here, we are using the following fact: If $D$ is a bounded domain with $\partial D$ a $C^1$ Jordan curve
then any bounded function $f \in C^1(D)$ with bounded derivatives is globally Lipschitz in $D$. This fact
is a nice exercise in basic analysis, which we leave to the reader.


Therefore we have
\begin{equation}
  \label{***}
|h(z_1) - h(z_2)| = \Bigl|g\Bigl(\frac{1}{z_1}\Bigr) - g\Bigl(\frac{1}{z_2}\Bigr)\Bigr| \leq 
C\Bigl|\frac{1}{z_1} - \frac{1}{z_2}\Bigr| = C\frac{|z_1 - z_2|}{|z_1||z_2|}.  
\end{equation}

In order to estimate $\|\vartheta^{\vare} - u_0\|_{L^2(\Pi_{\vare})}$ we use the fact that the support of $\omega_0$ is contained in $\Pi_{\vare}$ for $\vare$ sufficiently small $\vare$ to write
\begin{multline*}
2\pi[ \vartheta^{\vare}(x) - u_0(x)] = 
\int_{\Pi_{\vare}} \Bigl( \frac{1}{\vare}DT^t(x/\vare) \, \frac{(T(x/\vare) - T(y/\vare))^{\perp}}{|T(x/\vare) - T(y/\vare)|^2}- \frac{(x-y)^{\perp}}{|x-y|^2}\Bigr)\omega_0(y)\,dy \\
+ \int_{\Pi_{\vare}} \frac{1}{\vare} DT^t(x/\vare) \Bigl(\frac{(T(x/\vare))^{\perp}}{|T(x/\vare)|^2}- \frac{(T(x/\vare) - (T(y/\vare))^{\ast})^{\perp}}{|T(x/\vare) - (T(y/\vare))^{\ast}|^2}\Bigr)\omega_0(y)\,dy
\equiv \mathfrak{A^{\vare}} + \mathfrak{B^{\vare}}.
\end{multline*}

Let us begin by estimating $\bbug$. We make the change of variables $\eta = \vare T(y/\vare)$, whose Jacobian is 
$J = |\det (DT^{-1})(\eta/\vare)|$, a bounded function. Additionally, we set $z = \vare T(x/\vare)$. With this we find:
\[\bbug = DT^t(x/\vare)  \int_{\{|\eta| > \vare \}} 
\Bigl(\frac{z^{\perp}}{|z|^2}- \frac{(z - \vare^2 \eta^{\ast})^{\perp}}{|z - \vare^2\eta^{\ast}|^2}\Bigr)
\omega_0(\vare T^{-1}(\eta/\vare))\,J d\eta.\]
We observe now that there exists $\rho$ independently of $\vare$ such
that the support of $\omega_0(\vare T^{-1}(\eta/\vare))$ is contained in the set $\{|\eta| >
\rho\}$. 
Moreover, one can bound $|z-\vare^2\eta^\ast|\geq |z|-\vare^2|\eta^\ast|\geq |z|-\vare^2/\rho\geq |z|/2$ provided that $\vare^2\leq\rho/2$. Therefore we can write
\[|\bbug| \leq C \int_{\{|\eta| > \rho \}} \frac{\vare^2 |\eta^{\ast}|}{|z||z-\vare^2\eta^{\ast}|} 
|\omega_0(\vare T^{-1}(\eta/\vare))|\,J d\eta\leq C \frac{\vare^2}{|z|^2},\]
where $C$ depends on the support of $\omega_0$, on the $L^1$-norm of $\omega_0$ and on the domain $\Omega$ through the bounds on the conformal map $T$ and its derivatives. Finally, we use this estimate in the integral of the square of $\bbug$:
\[\int_{\Pi_{\vare}}|\bbug|^2\,dx \leq C\vare^4 \int_{\{ |z| > \vare \}} \frac{1}{|z|^4}\,dz \leq C \vare^2,\]
as desired. 

Next we treat $\abug$. First we re-write $\abug$ in a more convenient form:
\[\abug = \int_{\Pi_{\vare}} \frac{1}{\beta} DT^t(x/\vare) \Bigl(
\frac{\beta}{\vare} \frac{(T(x/\vare) - T(y/\vare))^{\perp}}{|T(x/\vare) - T(y/\vare)|^2}
- \frac{(x-y)^{\perp}}{|x-y|^2}\Bigr)\omega_0(y)\,dy \]

\[+ \int_{\Pi_{\vare}} \Bigl(\frac{1}{\beta} DT^t(x/\vare) - \mathbb{I} \Bigr)
 \frac{(x-y)^{\perp}}{|x-y|^2} \,\omega_0(y)\,dy \]

\[\equiv \abug_1 + \abug_2.\]

By \eqref{*}, the term $\abug_2$ can be easily estimated:
\[|\abug_2| \leq \frac{\vare^2}{|x|^2} \int_{\Pi_{\vare}} \frac{1}{|x-y|} \,|\omega_0(y)|\,dy \leq C
\frac{\vare^2}{|x|^2},\]
so this reduces to an estimate similar to the one we found for $\bbug$. 

Next we examine $\abug_1$. We use the expression for $T$ given in
\eqref{**} to write
\[\abug_1 = \frac{DT^t(\frac{x}{\vare})}{\beta} \int_{\Pi_{\vare}}  \Bigl(
\frac{(x-y + (\frac{\vare}{\beta})[h(\frac{x}{\vare})-h(\frac{y}{\vare} )])^{\perp}}{
|x-y + (\frac{\vare}{\beta})[h(\frac{x}{\vare})-h(\frac{y}{\vare})]|^2} - \frac{(x-y)^{\perp}}{|x-y|^2}\Bigr)\omega_0(y)\,dy. \]
With this we have:
\[|\abug_1| \leq C \int_{\Pi_{\vare}}\frac{|\frac{\vare}{\beta}[h(\frac{x}{\vare})-h(\frac{y}{\vare} )]|}
{|x-y + (\frac{\vare}{\beta})[h(\frac{x}{\vare})-h(\frac{y}{\vare})]||x-y|}\,|\omega_0(y)|\,dy.\]
We will make use several times of the estimate we obtained for $h$
given in \eqref{***}. First
\begin{equation} \label{Deltah}
|\frac{\vare}{\beta}|[h(\frac{x}{\vare})-h(\frac{y}{\vare} )]| \leq C\frac{\vare^2|x-y|}{|\beta||x||y|}.
\end{equation}
Using \eqref{Deltah} gives
\[ |\abug_1| \leq C \int_{\Pi_{\vare}}\frac{\vare^2}
{|x-y + (\frac{\vare}{\beta})[h(\frac{x}{\vare})-h(\frac{y}{\vare})]||\beta||x||y|}\,|\omega_0(y)|\,dy.\]
Let $R$, $r>0$ be such that the support of $\omega_0$ is contained in the disk of radius $R$ and outside the disk  of radius $r$. We will estimate $\abug_1$ in two regions: $|x| \geq 2R$ and $|x| < 2R$. Also, recall that the estimate of the 
$L^2$-norm of $\abug_1$ is to be performed in $\Pi_{\vare}$ so we may assume throughout that $|x| \geq C\vare$. Suppose first that $|x|\geq 2R$. Then we find:
\[|\abug_1| \leq C\frac{\vare^2}{|x|^2}.\]
Above we used that $r<|y|\leq|x|/2$ and hence $|x-y + (\frac{\vare}{\beta})[h(\frac{x}{\vare})-h(\frac{y}{\vare})]| \geq C|x|$ if $\vare$ is sufficiently small, since $h$ is bounded. Finally, in the region $C\vare \leq |x| < 2R$ we use \eqref{relh} and the fact that $|y|$ is of order 1  to bound
\begin{equation*}
|x-y + (\frac{\vare}{\beta})[h(\frac{x}{\vare})-h(\frac{y}{\vare})]
\geq  ||x-y|-\vare^2(|x-y|/|x||y|)|\geq \frac{|x-y|}2 
\end{equation*}
for $\vare$ small enough.
Therefore
\[|\abug_1|\leq C\frac{\vare^2}{|x|} \int_{\Pi_{\vare}}\frac{|\omega_0(y)|}{|x-y|} \,dy \leq C\frac{\vare^2}{|x|}\leq C\vare. 
 \]
Clearly this last portion has $L^2$-norm in the region $|x|<2R$ bounded by $C\vare$.

\end{proof}

{\bf Remark 1:} Let $\omega_0 \in C^{\infty}_c(\real^2 \setminus \{0\})$ and $\alpha \in \real$, $\alpha \neq m$. 
We observe that $K^{\vare}[\omega_0] + \alpha H^{\vare}$ does not converge strongly in $L^2$ to $K[\omega_0]
+ (\alpha-m)H$. We argue by contradiction, assuming this convergence holds. In view of the Lemma above, 
$K^{\vare}[\omega_0] + m H^{\vare}$ converges strongly to $K[\omega_0]$ so we must have
\[(\alpha - m)H^{\vare} \to (\alpha - m)H.\]
This does not hold, as it can be easily seen in the case of the exterior of the disk. In this case,
$H^{\vare} = H$ outside the disk of radius $\vare$, but $H^{\vare}$ vanishes for $|x| < \vare$.
Since $\|H\|_{L^2(\{|x|<\vare\})} = \infty$, we have a contradiction. 

{\bf Remark 2:} Note that if we were willing to confine our analysis to the exterior of a small disk,
the proof of Lemma \ref{indata} would be much simpler. 
Indeed, let $\Omega_{\vare} = \{|x|>\vare\}$. Then the conformal map $T$
is the identity, so $\abug \equiv 0$ and all that is needed is the easier estimate for $\bbug$. 

{\bf Remark 3:} The constant $\alpha - m$ is precisely the circulation of $\vartheta^{\vare}$ 
around the boundary of $\Pi_{\vare}$. The condition $\alpha - m = 0$ is physically reasonable, in
particular because viscous flows vanish at the boundary, and therefore, so does their circulation.
This is the condition for the small obstacle limit of ideal flow to satisfy Euler equations in the
full plane, see \cite{iln03} and also for the small obstacle limit of viscous flows to satisfy
the full plane Navier-Stokes equations for all viscosities, see \cite{iln06}. The argument in
\cite{iln06} required sufficiently small $\alpha - m$ to obtain the appropriate limit when 
$\vare \to 0$, and the smallness condition was actually $\alpha - m = \mathcal{O}(\nu)$ as 
$\nu \to 0$. 

We conclude this section with the formal statement of a corollary which encompasses Theorem \ref{mainthm}
and Lemma \ref{indata}.

\begin{corollary}
Let $\omega_0 \in C^{\infty}_c(\real^2 \setminus \{0\})$ and consider
$u_0$ and $\vartheta^{\vare}$ defined as in Lemma \ref{indata}. Let  
$u = u(x,t)$ be the global smooth solution of the incompressible Euler equations 
in $\real^2$, with initial velocity  $u_0$. Let $u^{\neps}$ be the solution of \eqref{neps} with 
initial velocity $\vartheta^{\vare}$. Fix $T>0$. There exist constants 
$C_1 = C_1(\Omega,\omega_0,T) > 0$ and $C_2=C_2(\Omega,\omega_0,T) > 0$ such that 
\[\|u^{\neps}(\cdot,t) - u(\cdot,t)\|_{L^2(\real^2)} \leq C_2 \sqrt{\nu},\]
for all $0<\vare<C_1 \nu$ and all $0\leq t \leq T$.

\end{corollary}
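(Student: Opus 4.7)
The plan is to derive this corollary as an immediate consequence of the quantitative (second) part of Theorem \ref{mainthm}, combined with Lemma \ref{indata}. All the real work has been done: what remains is simply to verify that the hypotheses of Theorem \ref{mainthm} apply to the specific choice $u_0 = K[\omega_0]$ and $\vartheta^{\vare} = K^{\vare}[\omega_0] + mH^{\vare}$ dictated here.

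First I would justify the existence of the global smooth Euler solution $u$ on $[0,T]$. This is a classical result in two dimensions: since $\omega_0 \in C^{\infty}_c(\real^2\setminus\{0\})$, the vorticity is transported along the flow and remains smooth with uniformly bounded support on $[0,T]$, so $u = K[\omega(\cdot,t)]$ is smooth on $\real^2 \times [0,T]$ with $\nabla u \in L^{\infty}$, and $\psi$, $\nabla\psi$ are uniformly bounded in time. I would remark that $u_0$ itself need not lie in $L^2(\real^2)$ when $m \neq 0$ (since $u_0 \sim mH$ at infinity), but Theorem \ref{mainthm} only needs the \emph{difference} $\vartheta^{\vare} - u_0$ to lie in $L^2$, which is precisely what Lemma \ref{indata} provides. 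The smoothness of the initial data $\vartheta^{\vare}$ and the existence of $u^{\neps}$ follow from the Kozono--Yamazaki theory cited in Section 2.

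Next, by Lemma \ref{indata}, there is a constant $C = C(\Omega,\omega_0)$ and an $\vare_0 > 0$ such that $\|\vartheta^{\vare} - u_0\|_{L^2(\real^2)} \leq C\vare$ for every $\vare < \vare_0$. Denote by $C_1^{\mathrm{thm}}$ the constant from Theorem \ref{mainthm} and choose $C_1 := \min\{C_1^{\mathrm{thm}}, \vare_0\}$ (throwing into $C_2$ the further restriction that $\nu$ be small enough so that $C_1\nu \le \vare_0$). For any $\vare < C_1 \nu$ we then have
\[
\|\vartheta^{\vare} - u_0\|_{L^2(\real^2)} \leq C\vare \leq C C_1 \nu = \mathcal{O}(\sqrt{\nu}) \qquad \text{as } \nu \to 0,
\]
so the rate hypothesis needed for the second assertion of Theorem \ref{mainthm} is satisfied.

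Applying Theorem \ref{mainthm} directly then yields $\|u^{\neps}(\cdot,t) - u(\cdot,t)\|_{L^2(\real^2)} \leq C_2\sqrt{\nu}$ for all $0 < \vare < C_1\nu$ and $0 \leq t \leq T$, with $C_2 = C_2(\Omega,u_0,T) = C_2(\Omega,\omega_0,T)$ since $u_0$ is determined by $\omega_0$. I do not expect any genuine obstacle at this stage: both the analytic energy-estimate machinery (Theorem \ref{mainthm}, relying on the cutoff field $u^{\vare}$ and the modified Poincaré inequality of Lemma \ref{poincare}) and the explicit initial-data bound (Lemma \ref{indata}, relying on the conformal map $T$ and its deviation from a linear map via \eqref{**}--\eqref{***}) have already been carried out. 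The only bookkeeping item worth double-checking is that all constants appearing in Lemma \ref{indata} depend solely on $\Omega$ and $\omega_0$, which is manifest from its proof.
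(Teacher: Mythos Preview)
Your proposal is correct and matches the paper's approach exactly: the paper presents this corollary without a separate proof, simply noting that it ``encompasses Theorem \ref{mainthm} and Lemma \ref{indata}.'' Your additional remarks (global 2D Euler existence, the $L^2$ issue when $m\neq 0$, and the bookkeeping of constants) are reasonable clarifications, but the core argument is precisely the two-line combination of Lemma \ref{indata} with the quantitative part of Theorem \ref{mainthm} that the paper intends.
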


\section{Physical interpretation and conclusions}

      The behavior of incompressible viscous flow past a bluff body, such as a long cylinder
or a sphere is a classical problem in fluid dynamics, to the extent of having conference series
devoted to it, see {\it http://www.mae.cornell.edu/bbviv5/}. Let us consider the simplest situation, 
two-dimensional flow of a viscous fluid with kinematic viscosity $\mu$, filling the whole plane minus 
a disk of diameter $L$, with constant driving velocity $U$ at infinity.  The disturbance caused by the 
disk, known as its {\it wake}, depends only on the {\it Reynolds number} associated with the flow, 
given by 
\[ Re  \equiv \frac{LU}{\mu}. \]
The observed behavior of the wake begins, for small $Re$, as a steady solution of the Navier-Stokes equations,
but the wake undergoes a series of bifurcations as $Re$ grows, progressively developing steady recirculation zones ($4 < Re < 40$, periodic recirculation and a Von Karman street ($40 < Re < 200$), nonperiodic vortex shedding 
($200 < Re < 400$), leading to turbulence ($Re > 400$). See \cite{panton}, Section 15.6, for details and illustrations.  

	In our problem, which involves nearly inviscid flow past a {\it small} bluff body, the qualitative behavior of
the wake of the small obstacle is determined by the {\it local} Reynolds number, which encodes the way in which
an observer at the scale of the obstacle experiences the flow. Basically, by making our obstacle small, we are making
the flow more viscous at its scale. We assume that the Navier-Stokes system under consideration, \eqref{neps}, is dimensional, i.e. has time and space measured in seconds and meters, and mass 
normalized so that fluid density is one. In these units, the kinematic viscosity for air is $\mu = 14.5 \times 10^{-6} m^2/sec$, and for water it is $\mu = 1.138 \times 10^{-6} m^2/sec$, both at $15^o C$. 

Let us restrict our discussion to the two-dimensional case. The smallness condition in Theorem \ref{mainthm}, \eqref{smcond}, reads
$\vare < C_1 \mu,$ and the {\it dimensional} constant $C_1$, requires closer scrutiny. Actually, the constant $C_1$
is given by:
\[C_1 = \frac{1}{8 K_4 K_6^2}, \]
where $K_4$ appears in Lemma \ref{epsests}, item (4), and $K_6$ is from Lemma \ref{poincare}.
$K_6$ is a non-dimensional constant that depends on the shape of the obstacle 
$\Omega$. The constant $K_4$ can be chosen as
\[K_4 = \sup_{x\in \real^n , \, t \in [0,T]} \vare(|\nabla \psi \nabla \varphi^{\vare}|(x,t)
+ |\psi \nabla^2 \varphi^{\vare}|(x,t)).\]

The function $\psi$ above is the stream function of the full-plane Euler flow, adjusted so
that $\psi(0,t) = 0$. Also, $\vare \nabla \varphi^{\vare}$ is $\mathcal{O}(1)$, localized 
near the obstacle and $\vare \nabla^2 \varphi^{\vare}$ is $\mathcal{O}(1/\vare)$, also localized 
near the obstacle. Therefore, both terms included in $K_4$ are associated with first derivatives
of the stream function at the obstacle, i.e. with the local velocity $u(0,t)$.  Therefore,
we can write $K_4 = \widetilde{K}_4 \sup_{t}|u(0,t)|$ (we can also assume that the limiting
Euler flow is stationary, to avoid the time dependence).

From the point of view of the obstacle, the inviscid velocity $u(0,t)$ acts as a constant (in space) forcing velocity imposed at infinity, and therefore, the qualitative behavior of the wake of the obstacle is determined 
by the local Reynolds number $Re_{\loc} \equiv u(0,t) \vare / \mu$. Clearly, condition \eqref{smcond} can be rewritten
as 
\[ Re_{\loc} < \frac{1}{8 \widetilde{K}_4 K_6^2}.\]
The non-dimensional constant $K_6$ is related to the constant in the Poincar\'{e} inequality in the unit disk.
Examining our proof for the case of the disk, we cannot make the constant $\widetilde{K}_4 K_6^2$ smaller 
than something of the order of $10$. Therefore, our result is restricted to rather viscous wakes.          

When it occurs, the turbulence is caused by vorticity shed by the obstacle through boundary layer 
separation. The main difficulty  in studying the vanishing viscosity limit in the presence of 
boundaries is the fact that, although the Navier-Stokes equations do have a vorticity form, valid 
in the bulk of the fluid, the vorticity equation does not satisfy a useful boundary condition, so 
that we cannot control the amount of vorticity added to the flow by the boundary layer. In the 
proof of Theorem \ref{mainthm}, we found a way of controlling the kinetic energy of the wake without 
making explicit reference to the vorticity. At this point, it is reasonable to ask whether we can 
control the vorticity content of the wake as well. To answer that, we introduce the {\it enstrophy} 
$\Omega^{\neps}(t)$ of the flow:
\[\Omega^{\neps}(t) \equiv \frac{1}{2} \int_{\Pi_{\vare}} |\mbox{ curl } u^{\neps}|^2 \, dx.\] 
Of course, enstrophy measures how much vorticity is in the flow, but its behavior as $\nu \to 0$ is 
also involved in the statistical structure of a turbulent wake.

\begin{corollary} 
For any $T>0$ there exists a constant $C>0$, independent of $\nu$ such that
\[\int_0^T \Omega^{\neps}(t) \, dt \leq C.\]
\end{corollary}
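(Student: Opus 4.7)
The plan is to bound $\int_0^T \|\nabla u^{\neps}(\cdot,t)\|_{L^2(\Pi_\vare)}^2\, dt$ uniformly in $\nu$, which suffices since pointwise $|\curl u^{\neps}|^2 \leq 2|\nabla u^{\neps}|^2$ and therefore $\Omega^{\neps}(t)\leq\|\nabla u^{\neps}\|_{L^2}^2$. The naive energy inequality $2\nu\int_0^T\|\nabla u^{\neps}\|_{L^2}^2 \leq \|\vartheta^{\vare}\|_{L^2}^2$ yields only $O(1/\nu)$, so the improvement must come from the decomposition $u^{\neps} = u^{\vare} + W^{\neps}$ used in the proof of Theorem \ref{mainthm}.

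First, I would write
\[\|\nabla u^{\neps}\|_{L^2}^2 \leq 2\|\nabla u^{\vare}\|_{L^2}^2 + 2\|\nabla W^{\neps}\|_{L^2}^2\]
and use Lemma \ref{epsests}(1) to dispose of the first term, which contributes at most $2K_1 T$ after time integration. It remains to bound $\int_0^T\|\nabla W^{\neps}\|_{L^2}^2\, dt$ uniformly in $\nu$.

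Second, I would revisit inequality \eqref{bliblu}. Under the smallness condition \eqref{smcond} with $\vare\leq C_1\nu$, the terms $\vare^2/\nu$ and $\vare^2$ on the right-hand side are $O(\nu)$, and all the $\|\nabla W^{\neps}\|_{L^2}^2$-proportional terms there can be absorbed into the $\nu\|\nabla W^{\neps}\|_{L^2}^2$ on the left, leaving a surviving dissipation of size $\tfrac{\nu}{8}\|\nabla W^{\neps}\|_{L^2}^2$. This yields an inequality of the form
\[\frac{1}{2}\frac{d}{dt}\|W^{\neps}\|_{L^2}^2 + \frac{\nu}{8}\|\nabla W^{\neps}\|_{L^2}^2 \leq C\nu + C\|W^{\neps}\|_{L^2}^2,\]
with $C=C(u_0,\Omega,T)$. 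Since the initial-data rate $\|\vartheta^{\vare} - u_0\|_{L^2} = O(\sqrt{\nu})$ is available in the setting of this corollary (by Lemma \ref{indata}), the main theorem already provides $\|W^{\neps}(t)\|_{L^2}^2 \leq C\nu$ on $[0,T]$.

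Third, integrating the above inequality from $0$ to $T$ and using the $O(\nu)$ control on $\|W^{\neps}\|_{L^2}^2$ in both the initial data and the last term on the right gives
\[\frac{\nu}{8}\int_0^T \|\nabla W^{\neps}\|_{L^2}^2\, dt \leq \frac{1}{2}\|W^{\neps}(0)\|_{L^2}^2 + CT\nu + C\int_0^T \|W^{\neps}\|_{L^2}^2\, dt \leq C'\nu,\]
so dividing through by $\nu/8$ produces the desired uniform bound. The only real subtlety is the observation that, after all absorptions in \eqref{bliblu}, a genuine viscous dissipation of size $\nu$ actually survives on the left; once this is noted, the $O(\nu)$ size of the wake in $L^2$ supplies exactly the factor needed to cancel the $1/\nu$ from the raw energy inequality, and no new analytic obstacles beyond careful bookkeeping of the constants from the main theorem's proof should arise.
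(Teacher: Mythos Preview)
Your proposal is correct and follows essentially the same route as the paper: retain the surviving viscous dissipation in \eqref{bliblu} (the paper writes it as $\tfrac{\nu}{4}\|\nabla W^{\neps}\|_{L^2}^2$ after multiplying through by $2$, which matches your $\tfrac{\nu}{8}$), integrate in time, feed in the $O(\nu)$ bound on $\|W^{\neps}\|_{L^2}^2$ from Theorem~\ref{mainthm} and Lemma~\ref{indata}, and then pass from $\nabla W^{\neps}$ to the enstrophy via the decomposition $u^{\neps}=u^{\vare}+W^{\neps}$ and Lemma~\ref{epsests}(1). The only cosmetic difference is the order of exposition: you reduce $\Omega^{\neps}$ to $\|\nabla W^{\neps}\|_{L^2}^2$ first, whereas the paper bounds $\int_0^T\|\nabla W^{\neps}\|_{L^2}^2$ first and relates it to $\Omega^{\neps}$ at the end.
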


\begin{proof}
We go back to relations \eqref{bliblu} and \eqref{dty} and include the viscosity term which had been ignored. 
We find:
\begin{equation*}
\frac{dy}{dt} +\frac\nu4\|\nabla W^{\neps}\|_{L^2}^2 \leq C'_1\nu + C'_2 y.  
\end{equation*}

We next integrate in time to obtain
\[ \|W^{\neps}(\cdot,T)\|_{L^2}^2 - \|W^{\neps}(\cdot,0)\|_{L^2}^2 +
\frac{\nu}{4} \int_0^T \|\nabla W^{\neps}(\cdot,t)\|_{L^2}^2 \, dt\]
\[\leq  C'_1 T \nu + C'_2 \int_0^T \|W^{\neps}(\cdot,t)\|_{L^2}^2 \, dt.\]
Now we use Theorem \ref{mainthm} and ignore a term with good sign to obtain
\[\frac{\nu}{4} \int_0^T \|\nabla W^{\neps}(\cdot,t)\|_{L^2}^2 \, dt \leq C T \nu + 
\|W^{\neps}(\cdot,0)\|_{L^2}^2  \leq C' T \nu,\]
where we used Lemma \ref{indata} together with item (3) from Lemma \ref{epsests} to
estimate the initial data term. From this we conclude that 
\[ \int_0^T \|\nabla W^{\neps}(\cdot,t)\|_{L^2}^2 \, dt \leq C.\]
Finally, we observe that 
\[\Omega^{\neps} \leq C\|\nabla W^{\neps}\|_{L^2}^2 +
   C\|\nabla u^{\vare}\|_{L^2}^2 \leq C \|\nabla W^{\neps}\|_{L^2}^2+ CK_1,\]
by item (1) in Lemma \ref{epsests}. This concludes the proof.

\end{proof} 

Finally, let us consider some open questions naturally associated with the research
presented here. First, one would like to weaken, and ultimately remove, the smallness condition 
on the size of the obstacle; this is the most physically interesting follow-up problem.
Second, one would also like to consider two dimensional flows with nonzero initial circulation 
at the obstacle, in order to study the interaction of the vanishing viscosity and vanishing obstacle limits
in more detail. This would improve the connection of the present work with the authors' previous 
results in \cite{iln03,iln06}. An easier version of this second problem would be to consider 
an initial circulation of the form $\gamma = \gamma(\nu)$ and find out how fast $\gamma$ has to 
vanish as $\nu \to 0$ in order to retain our result. A third problem is to describe more precisely 
the asymptotic structure of the difference between the full-space Euler flow and the approximating
small viscosity, small obstacle flows.         

{\footnotesize  {\it Acknowledgments:} 
This work was done  during the Special Semester in Fluid Mechanics at the Centre Interfacultaire Bernoulli, EPFL; the authors wish to express their gratitude for the hospitality received. The authors would like to thank Peter Constantin,
Jim Kelliher and Franck Sueur for many helpful comments.}


\begin{thebibliography}{1}


\bibitem{Chemin01} J.-Y. Chemin. {\it A remark on the inviscid limit for two-dimensional incompressible fluids.} 
Comm. Partial Differential Equations {\bf 21} (1996), no. 11-12, 1771--1779. 

\bibitem{cmr98} T. Clopeau, A. Mikeli\'{c} and R. Robert. {\it On the vanishing viscosity limit for the $2{\rm D}$ incompressible Navier-Stokes equations with the friction type boundary conditions.}  Nonlinearity  {\bf 11}  (1998),  no. 6, 1625--1636. 



\bibitem{hopf} E. Hopf. \emph{{\"U}ber die Anfangswertaufgabe f{\"u}r die hydrodynamischen
 Grundgleichungen.}  Math. Nachr.  \textbf{4},  (1951), 213--231.


\bibitem{iftimie} D. Iftimie and J. Kelliher, {\it Remarks on the vanishing obstacle limit for a 3D viscous incompressible fluid.} Preprint available at \\
\textsf{http://math.univ-lyon1.fr/\~{}iftimie/ARTICLES/viscoushrink3d.pdf}.

\bibitem{iln03} 
D. Iftimie, M. C. Lopes Filho, and H. J. Nussenzveig Lopes. {\it Two dimensional incompressible ideal
flow around a small obstacle.} Comm. Partial Differential Equations {\bf 28} (2003), no. 1-2, 349--379.    

\bibitem{iln06} 
D. Iftimie, M. C. Lopes Filho, and H. J. Nussenzveig Lopes. {\it Two dimensional incompressible viscous
flow around a small obstacle.} Math. Ann. {\bf 336} (2006), no. 2, 449--489.  

\bibitem{iln06b}   
D. Iftimie, M. C. Lopes Filho, and H. J. Nussenzveig Lopes. {\it Confinement of vorticity in
two dimensional ideal incompressible exterior flow.} Quart. Appl. Math. {\bf 65} (2007), 499--521. 

\bibitem{ip07} D. Iftimie and G. Planas. {\it Inviscid limits for the Navier-Stokes equations with Navier friction boundary conditions.} Nonlinearity {\bf 19} (2006), no. 4, 899--918. 

\bibitem{Kato72} T. Kato. {\it  Nonstationary flows of viscous and ideal fluids in $  R\sp{3}$.}  
J. Functional Analysis {\bf 9}  (1972), 296--305.

\bibitem{Kato83} T. Kato. {\it Remarks on zero viscosity limit for nonstationary Navier-Stokes  flows with boundary.}  Seminar on nonlinear partial differential equations (Berkeley, Calif., 1983),  85--98,
Math. Sci. Res. Inst. Publ., {\bf 2}, Springer, New York, 1984. 

\bibitem{Kelliher} J. Kelliher. {\it On Kato's conditions for vanishing viscosity.} Indiana Univ. Math. J. {\bf 56}
(2007), no. 4, 1711--1721.

\bibitem{kozono-yamazaki} H. Kozono and M. Yamazaki. {\it Local and global unique solvability of the Navier-Stokes exterior problem with Cauchy data in the space $L\sp {n,\infty}$.} Houston J. Math. {\bf 21} (1995), no. 4, 755--799. 
  
\bibitem{Lopes07} M. C. Lopes Filho. {\it Vortex dynamics in a two-dimensional domain with holes and the small obstacle limit.} SIAM J. Math. Anal. {\bf 39} (2007), 422--436.   
 
\bibitem{lmn07} M. C. Lopes Filho, A. Mazzucato and H. J. Nussenzveig Lopes. {\it Vanishing viscosity limit
for incompressible flow inside a rotating circle.} Submitted.

\bibitem{lmnt08} M. C. Lopes Filho, A. Mazzucato, H. J. Nussenzveig Lopes and M. Taylor. {\it Vanishing Viscosity Limits and Boundary Layers for Circularly Symmetric 2D Flows.} Submitted.

\bibitem{lnp05} M. C. Lopes Filho, H. J. Nussenzveig Lopes and G. Planas. {\it On the inviscid limit for two-dimensional incompressible flow with Navier friction condition.}  SIAM J. Math. Anal. {\bf 36}  (2005),  no. 4, 1130--1141. 
 
\bibitem{Majda93} A. Majda.  {\it Remarks on weak solutions for vortex sheets with a distinguished sign.}
Indiana Univ. Math. J. {\bf 42} (1993), no. 3, 921--939.
 
\bibitem{matsui95} S. Matsui. {\it Example of zero viscosity limit for two-dimensional nonstationary Navier-Stokes flows with boundary.} Japan J. Indust. Appl. Math. {\bf 11} (1994), no. 1, 155--170. 

\bibitem{panton} R. J. Panton, {\it Incompressible Flow}, John Wiley \& Sons, New York, 1984.  

\bibitem{Swann71} H. S. G. Swann. {\it The convergence with vanishing viscosity of nonstationary Navier-Stokes flow to ideal flow in $R\sb{3}$.} Trans. Amer. Math. Soc. {\bf  157} (1971) 373--397. 

\bibitem{tw02} R. Temam and X. Wang. {\it On the behavior of the solutions of the Navier-Stokes equations at vanishing viscosity.} Dedicated to Ennio De Giorgi. Ann. Scuola Norm. Sup. Pisa Cl. Sci. (4) {\bf 25} (1998), no. 3-4, 807--828.

\bibitem{tw} R. Temam and X. Wang. {\it Boundary layers associated with incompressible Navier-Stokes equations: the noncharacteristic boundary case}. J. Differential Equations {\bf 179} (2002), no. 2, 647--686.  

\bibitem{Wang} X. Wang. {\it A Kato type theorem on zero viscosity limit of Navier-Stokes flows.} Dedicated to Professors Ciprian Foias and Roger Temam (Bloomington, IN, 2000). Indiana Univ. Math. J. {\bf 50} (2001), Special Issue, 223--241.

\bibitem{xz07} Yuelong Xiao and Zhouping Xin. {\it On the vanishing viscosity limit for the 3D Navier-Stokes equations
with a slip boundary condition.} Comm. Pure and Appl. Math. {\bf 60} (2007), no. 7, 1027--1055.

 
\end{thebibliography}
\end{document}